\numberwithin{equation}{subsection}
\newtheorem{theorem}{Theorem}[subsection]
\newtheorem{proposition}[theorem]{Proposition}
\newtheorem{corollary}[theorem]{Corollary}
\newtheorem{lemma}[theorem]{Lemma}
\theoremstyle{definition}
\newtheorem{definition}[theorem]{Definition}
\newtheorem{remark}[theorem]{Remark}
\newtheorem{example}[theorem]{Example}
\newtheorem{notation}[theorem]{Notation}
\newtheorem {empt}[theorem]      {}
\newcommand{\R}{\mathbb{R}}
\newcommand{\PP}{\mathbb{P}}
\newcommand{\GG}{\mathbb{G}}
\newcommand{\be}{\begin{enumerate}}
\newcommand{\bi}{\begin{itemize}}
\newcommand{\ee}{\end{enumerate}}
\newcommand{\ei}{\end{itemize}}
\renewcommand{\P}{\mathbb{P}}
\newcommand{\cP}{{\mathcal P}}
\renewcommand{\P}{\mathbb{P}}
\newcommand{\HH}{\mathbb{H}}
\newcommand{\V}{\mathbb{V}}
\newcommand{\Vect}{\mathsf{Vect}}
\newcommand{\Man} {\mathsf{Man}}
\newcommand{\Graph}{\mathsf{Graph}}
\newcommand{\FinGraph}{\mathsf{FinGraph}}
\newcommand{\Ctrl}{\mathsf{Ctrl}}
\newcommand{\inv}{^{-1}}
\newcommand{\toto}{\rightrightarrows}
\newcommand{\Hom}{\mathrm{Hom}}
\newcommand{\Aut}{\mathrm{Aut}}
\DeclareMathOperator{\essim}{essim}
\newcommand{\op}[1]{{#1}^{\mbox{\sf{\tiny{op}}}}}
\DeclareMathOperator{\rt}{rt}
\DeclareMathAlphabet{\mathpzc}{OT1}{pzc}{m}{it}
\newcommand{\Control}{\mathsf{Control}}
\newcommand{\scI}{\mathscr{I}}
\renewcommand{\S}{\mathbb{{S}}}
\title{Modular dynamical systems on networks}
\author{Lee DeVille and Eugene Lerman}
\date{}
\begin{document}

\maketitle

\begin{abstract}

  We propose a new framework for the study of continuous time
  dynamical systems on networks.  We view such dynamical systems as
  collections of interacting control systems.  We show that a class
  of maps between graphs called {\sf graph fibrations} give rise to maps
  between dynamical systems on  networks.  This allows us to
  produce conjugacy between dynamical systems out of combinatorial
  data.  In particular we show that surjective graph fibrations lead
  to synchrony subspaces in networks.  The injective graph fibrations,
  on the other hand, give rise to surjective maps from large dynamical
  systems to smaller ones. One can view these surjections as a kind of
  ``fast/slow'' variable decompositions or as ``abstractions'' in the
   computer science sense of the word.

\end{abstract}

\tableofcontents

\section{Introduction}\label{sec:intro}

\subsection{Overview}

A fundamental question in the study of dynamical systems is to
determine the existence and properties of a map that intertwines the
dynamics of two different systems. Stated concretely, given two
manifolds $M,N$, and two flows $\varphi_t\colon M\to M, \psi_t\colon
N\to N$, does there exist a map $h\colon M\to N$ such that
\begin{equation}\label{eq:flow1}
  h\circ \varphi_t = \psi_t\circ h?
\end{equation}
Equivalently, given two manifolds and two vector fields $X\colon M\to
TM$, $Y\colon N\to TN$, does there exist a map $h$ such that
\begin{equation}\label{eq:vf1}
  dh\circ X = Y\circ h?
\end{equation}
If there is an $h$ that satisfies~\eqref{eq:flow1} or~\eqref{eq:vf1},
then we call this a {\em map of dynamical systems}.  Given such a map
$h$, we would like to understand its properties and to compute it
explicitly.

A common restriction requires that $h$ be invertible.  In this case,
it is said that we have exhibited a {\em conjugacy} between the two
dynamical systems, and this means that all of the dynamical features
of the flow are the same\footnote{Note that what we mean by ``the
  same'' depends on the category in which we work.  For instance if
  $h$ is a homeomorphism then we say that the dynamical systems are
  topologically the same.  Many of the implications of the existence
  of a conjugacy are worked out in~\cite{Smale.67, Alligood.etal.book,
    Devaney.book}.}.  The notion of conjugacy of dynamical systems
goes back at least to Poincar\'{e}~\cite{Poincare1, Poincare2,
  Poincare3}, it was further developed by Smale and
collaborators~\cite{Smale.67, Smale.Hirsch.book}, and is now the basic
notion in modern dynamical systems theory.

A more general notion of conjugacy arises from the relaxation of the
assumption of invertibility; here, the existence of the map $h$ still
produces significant information.
For example, the flow $\psi_t$ has a fixed point iff we can exhibit a
map $h\colon\{*\}\to N$, where $\{*\}$ is a one-point set, and $h$
satisfies~\eqref{eq:flow1}.
If $M = S^1$ and the flow $\varphi_t$ is given by $\varphi_t
(e^{i\theta}) = e^{2\pi it/T}e^{i\theta}$, then the existence
of $h\colon M\to N$ satisfying~\eqref{eq:flow1} amounts to the flow
$\psi_t$ having a periodic orbit of period $T$.

It is also common for $h$ to be chosen to be surjective. In this case
the map is typically termed a {\em
  semiconjugacy}~\cite{Brin.Stuck.book, Alligood.etal.book} and
certain nice properties follow.  We do not expand on this here, but we
will exploit the existence of semi-conjugacies for networked systems
below in Section~\ref{sec:surjective}.

The question of determining whether a map relating two dynamical
systems exists, and what its properties might be, is exceedingly
difficult~\cite{Yoccoz.95, Aubry.83, Mather.82, E.99} in general. In
some cases, even if such an $h$ is known to exist, determining its
form (or even qualitative properties) can be challenging.

In a different direction, dynamical systems defined on networks have
become the fundamental object of study across a variety of fields.
Some examples include the design of communications
networks~\cite{Rogers.Kincaid.book}; cognitive science, computational
neuroscience, and robotics (see, for example \cite{Knight.72, EK84,
  Kuramoto.91, Kuramoto.book, MacKay.book, Dayan.Abbott.book}); gene
regulatory networks~\cite{Bower.Bolouri.book, Shmulevich.etal.02,
  Alvarez-Buylla.etal.07} and more general complex biochemical
networks~\cite{Wilkinson.book}; and finally in complex active
media~\cite{Peskin.75, Tyson.Keener.88, Kapral.Showalter.book,
  Ermentrout.Rinzel.96, Keener.Sneyd.book}.

There are multiple definitions in the literature of what it means to
define a ``dynamical system on a network'' and we will not compare
them here. 
A common thread running through these definitions
is that imposing a network structure on a dynamical system 
should mean that
component $j$ of the system  depends upon
component $i$ of the system iff the underlying graph has an edge $i\to
j$.

In this paper we show that an imposition of a ``network structure'' on
a dynamical system allows us to produce maps between dynamical systems
in a precise, computable and combinatorial manner from finite data.
Thus the purpose of this manuscript is twofold: first, to present a
notion of a dynamical system ``consistent with a graph''; second, to
show that certain maps between graphs induce maps between the
dynamical systems that live on them.  In particular, we will show
below that all graph maps that respect a particular combinatorial
structure induce maps between the dynamical systems living on these
graphs.

 We focus  on the case where the dynamics is modeled by
vector fields on manifolds.  The interactions of subsystems are coded
by directed (multi-)graphs with ``labels''.  These labels in
particular, assign to each node of a graph the phase space of the
relevant subsystem.%
The ideas of the paper  can be
extended to both discrete-time, hybrid and stochastic systems, and we plan to
do so in future work.

As stated above, the main result of the paper is the construction of
maps of dynamical systems from maps of labeled graphs. %
In particular we show that 
surjective maps of graphs, such as the one
arising from quotienting a graph by an appropriate equivalence
relation, give rise to embeddings of dynamical systems; second,
injective maps of graphs give rise submersions of the corresponding
phase spaces and surjective maps of dynamical systems.  The former is
very useful in characterizing the ``modularity'' of a networked
dynamical system; the latter gives a precise mathematical formulation
of some intuitive notions of whether and how we can think of a large
dynamical system driven by a subsystem.

\subsection{Background and previous work}\label{sec:background}

The present paper  is inspired by 
several distinct bodies of work
that are well known in the applied mathematics communities.

The first body of work has been mainly applied to chemical reaction
systems, and, in some specific cases, to Petri nets; this work has
been used in both the deterministic and stochastic settings. 
One of the earliest
results in this direction is the ``zero deficiency theorem'' of
Feinberg~\cite{Feinberg.87, Feinberg.89, Schlosser.Feinberg.94,
  Feinberg.95, Craciun.Tang.Feinberg.06, Shinar.Alon.Feinberg.09},
first used to show the existence of stable equilibria in biochemical
systems and then expanded to statements about the existence and
structure of the equilibria in stochastic biochemical
systems~\cite{Anderson.Craciun.Kurtz.10}.  This type of methodology
has also been expanded to Petri nets~\cite{Gaubert.Gunawardena.98,
  Gunawardena.LN, Mairesse.Nguyen.09} and models describing gene
regulatory networks~\cite{Shmulevich.etal.02}.

The second body of work is due to Golubitsky, Stewart, and various
collaborators~\cite{Golubitsky.Stewart.84, Golubitsky.Stewart.85,
  Golubitsky.Stewart.86, Golubitsky.Stewart.86.2,
  Golubitsky.Stewart.87, Golubitsky.Stewart.Schaeffer.book,
  Field.Golubitsky.Stewart.91, Golubitsky.Stewart.Dionne.94,
  Dellnitz.Golubitsky.Hohmann.Stewart.95,
  Dionne.Golubitsky.Silber.Stewart.95, Dionne.Golubitsky.Stewart.96.1,
  Dionne.Golubitsky.Stewart.96.2, Golubitsky.Stewart.Buono.Collins.98,
  Golubitsky.Stewart.98, Golubitsky.Stewart.00,
  Golubitsky.Knobloch.Stewart.00, Golubitsky.Stewart.02,
  Golubitsky.Stewart.02.2, Stewart.Golubitsky.Pivato.03,
  Golubitsky.Nicol.Stewart.04, Golubitsky.Pivato.Stewart.04,
  Golubitsky.Stewart.05, Golubitsky.Stewart.Torok.05,
  Golubitsky.Stewart.06, Golubitsky.Josic.Brown.06,
  Golubitsky.Shiau.Stewart.07}. 
These authors considered a 
notion of ODEs (vector fields defined on Euclidean spaces) that were
consistent with a graph structure.  The resulting networks are called
coupled cell systems and the approach the groupoid formalism.
The main idea was to consider ``balanced'' equivalence relations on
the vertices of a graph.  They showed that these relations lead to the
existence of certain invariant subspaces termed ``polydiagonals.''  We
will see that the quotient maps resulting from balanced equivalence
relations are instances of graph fibration in the sense of Boldi and
Vigna~\cite{Vigna1}.  However, while we are greatly indebted to this
body of work for its intellectual inspiration, we also point out that
our approach differs from this work in some very specific ways.  We
elucidate the connections and contrasts in Remark~\ref{rem:GS}
below.

An alternative approach to coupled cell systems has been developed by
Field and collaborators~\cite{FieldCD,AgarwalField, Aguiar11}.  They
also considered ODEs and other types of dynamical systems consistent
with directed graphs. This approach is considered broadly equivalent
to that of Golubitsky {\em et al.}

\subsection{The contributions of the paper}
In this paper we propose a new framework for the study of continuous
time dynamical systems on networks.  We view such dynamical systems as
collections of interacting control systems.  We show that a class of
maps between graphs called {\sf graph fibrations} give rise to maps
between dynamical systems on networks.  This allows us to produce
conjugacy between dynamical systems out of combinatorial data.  While
the current work is certainly inspired by the methods and results of
both the Feinberg et al., Golubitsky et al. and, to a lesser extent, 
Field et al.\ groups, our approach and results differ in
several important 
respects.

\begin{enumerate}

\item Our basic philosophy is that of category theory --- so rather
  than study dynamical systems one at a time we aim to study  maps
  between all relevant dynamical systems  at once. 
  To quote Silverman~\cite{Silverman.book}:

\begin{quotation}
  A meta-mathematical principle is that one first studies (isomorphism
  classes of) objects, then one studies the maps between objects that
  preserve the objects' properties, then the maps themselves become
  objects for study and one tries to put a ``nice'' structure on the
  collection of maps (often modulo some equivalence relation).
\end{quotation}

\item Our set-up is coordinate-free and works for vector fields on
  manifolds, not just $\mathbb{R}^n$. In this case we are enlarging
  both the scope of the Feinberg et al. work (polynomial vector fields
  on positive orthants) and that of Golubitsky et al. (vector fields
  on Euclidean spaces).  This aspect of our approach is similar to the
  work of Field et al.
  There are several motivations for working on manifolds as opposed to
  ODEs living in Euclidean spaces.  These include dealing effectively
  with constraints, and
  extending 
 the results to the setting of geometrical mechanics.

\item It will be evident from the construction below that the quotient
  maps of graphs by balanced equivalence relations
  of~\cite{Golubitsky.Stewart.Torok.05} are special cases of graph
  fibrations --- they are the surjective graph fibrations. However,
  even in the case of surjective graph fibrations our maps of
  dynamical systems have the opposite direction from the maps in the
  groupoid formalism.  Rather than restricting from polydiagonals we
  extend from polydiagonals.  This allows us to deal with surjective
  and general graph fibration on the same footing.%

\end{enumerate}

\subsection{Motivating example}\label{sec:example}

Consider an ODE in $({\mathbb R}^n)^3$ of the form
\begin{equation}\label{eq1.2.1}
  \dot x_1 = f(x_2),\quad   \dot x_2 = f(x_1), \quad  \dot x_3 = f(x_2)
\end{equation}
for some smooth function $f: {\mathbb R}^n\to {\mathbb R}^n$.  That
is, consider the flow of the vector field
\[
F: ({\mathbb R}^n)^3\to ({\mathbb R}^n)^3, \quad 
F(x_1, x_2, x_3) = (f(x_2), f(x_1), f(x_2)).
\]
It is easy to check that $F$ is tangent to the diagonal 
\[
{\mathbb R}^n \simeq  \Delta 
= \{(x_1, x_2, x_3) \in (\mathbb{R}^n)^3 \mid x_1 = x_2 = x_3\}
\]
and that the restriction of the flow of $F$ to $\Delta$
is the flow of the ODE
\[
\dot u = f(u).
\] 
One can also see another invariant submanifold of $F$:
\[
({\mathbb R}^n)^2 \simeq  \Delta' = \{(x_1, x_2, x_3) 
\in ({\mathbb R}^n)^3 \mid x_1 = x_3\}.
\]
On $\Delta'$ the flow of $F$ is the flow of the ODE
\[
\dot v_1 = f(v_2), \quad \dot v_2 = f(v_1).
\]
Moreover the projection 
\[
\pi: ({\mathbb R}^n)^3 \to \Delta', \quad \pi (x_1, x_2, x_3) = (x_1, x_2, x_1)
\]
intertwines the flows of $F$ on $({\mathbb R}^n)^3$ and on $\Delta'$.  
We have thus observed two subsystems of
$(({\mathbb R}^n)^3, F)$ and three maps between the three dynamical systems:
\begin{equation}\label{eq1.2.2}
\xy
(-40, 0)*+{(\Delta, F|_\Delta)}="1";
(0,0)*+{(({\mathbb R}^n)^3, F)}="2";
(10,1)*+{}="2+"; 
(10,-1)*+{}="2-"; 
(40, 0)*+{(\Delta', F|_{\Delta'})}="3";
(30, 1)*+{}="3+";
(30, -1)*+{}="3-";
 {\ar@{^{(}->} "1";"2"};
 {\ar@{_{(}->} "3+";"2+"};
 {\ar@{->}_{\pi} "2-";"3-"}; 
\endxy
\end{equation}
Where do these subsystems and maps come from?  There is no obvious
symmetry of $({\mathbb R}^n)^3$ that preserves the vector field $F$
and fixes the diagonal $\Delta$ and thus could account for the
existence of this invariant submanifold.  Nor is there any
$F$-preserving symmetry that fixes $\Delta'$.  In fact the vector
field $F$ does not seem to have any symmetry.  The graph $G$
recording the interdependence of the variables $(x_1, x_2, x_3)$ in
the ODE \eqref{eq1.2.1} has three vertices and three arrows:
\begin{equation}\label{eq:favorite}
\xy
(-30,0)*+{G =}="n1";
(-15,0)*++[o][F]{1}="1"; 
(15,0)*++[o][F]{2}="2"; 
(45,0)*++[o][F]{3}="3"; 
{\ar@/_1.5pc/ "1";"2"};
{\ar@{->}_{} "2";"3"};
{\ar@/_1.5pc/ "2";"1"};
\endxy
\end{equation}
The graph has no non-trivial symmetries.  Nonetheless, the existence
of the subsystems $(\Delta, F|_\Delta)$, $(\Delta', F|_{\Delta'})$ and
the whole diagram of the dynamical systems \eqref{eq1.2.2} can be
deduced from certain properties of the graph $G$.  There are two
surjective maps of graphs:
\[
\varphi: G \to \xymatrix{ *+[o][F]{} \ar@(dr,ur)}
\]
and 
\[
\psi: G \to \xy
(-15,0)*++[o][F]{a}="1"; 
(15,0)*++[o][F]{b}="2"; 
{\ar@/_1.5pc/ "1";"2"};
{\ar@/_1.5pc/ "2";"1"};
\endxy,
\]
with $\psi $ defined on the vertices by $\psi(2) = b$, $\psi(1) = a =
\psi(3)$, and one embedding
\[
\tau: 
\xy 
(-10,0)*++[o][F]{a}="1"; 
(10,0)*++[o][F]{b}="2"; 
{\ar@/_1.2pc/ "1";"2"};
{\ar@/_1.2pc/ "2";"1"};
\endxy \quad
\hookrightarrow \quad 
\xy
(-10,0)*++[o][F]{1}="1"; 
(10,0)*++[o][F]{2}="2"; 
(30,0)*++[o][F]{3}="3"; 
{\ar@/_1.2pc/ "1";"2"};
{\ar@{->}_{} "2";"3"};
{\ar@/_1.2pc/ "2";"1"};
\endxy.
\]
We can collect all of these maps into one diagram
\begin{equation}\label{eq:Gdiag}
\xy
(-17, 0)*+{ \xymatrix{ *+[o][F]{} \ar@(dr,ur)}}="1";
(15,0)*+{ \xy
(-9,0)*++[o][F]{1}="1"; 
(9,0)*++[o][F]{2}="2"; 
(24,0)*++[o][F]{3}="3"; 
{\ar@/_1.2pc/ "1";"2"};
{\ar@{->}_{} "2";"3"};
{\ar@/_1.2pc/ "2";"1"};
\endxy}="2";
(68, 0)*+{\xy 
(-9,0)*++[o][F]{a}="1"; 
(9,0)*++[o][F]{b}="2"; 
{\ar@/_1.2pc/ "1";"2"};
{\ar@/_1.2pc/ "2";"1"};
\endxy }="3";
  {\ar@{->}_{\varphi} (-10,0); (-20, 0)};
 {\ar@{->}_{\tau} (50,1); (38,1)};
  {\ar@{->}_{\psi} (38,-1); (50, -1)}; 
\endxy
\end{equation}

A comparison of \eqref{eq1.2.2} and~\eqref{eq:Gdiag} evokes a  pattern:
for every map which intertwines dynamical systems in~\eqref{eq1.2.2},
there is a corresponding map of graphs in~\eqref{eq:Gdiag} with the
arrows reversed, and vice versa. \\

The same pattern holds when we replace the vector space $\mathbb{R}^n$
by an arbitrary manifold $M$.  Given a pair of manifolds $U$ and $N$, we
think of a map $X:U\times N\to TN$ with $X(u,n)\in T_nN$ as a control
system with the points of $U$ controlling the the dynamics on $N$.
Now consider a vector field 
\[
F:M^3 \to T(M^3) = TM\times TM \times TM
\]
of the form
\[
F(x_1, x_2, x_3) = (f(x_2, x_1), f(x_1, x_2), f(x_2, x_3))
\]
for some control system
\[
f: M\times M \to TM, \quad \textrm{with} \quad f(u,v)\in T_v M.
\]
Then once again the three maps of graphs in the
diagram~\eqref{eq:Gdiag} give rise to maps of dynamical systems
\begin{equation}\label{eq.EqDiag}
\xy
(-40, 0)*+{(\Delta_M, F|_{\Delta_M})}="1";
(0,0)*+{(M^3, F)}="2";
(10,1)*+{}="2+"; 
(10,-1)*+{}="2-"; 
(42, 0)*+{(\Delta'_M, F|_{\Delta'_M})}="3";
(30, 1)*+{}="3+";
(30, -1)*+{}="3-";
 {\ar@{^{(}->} "1";"2"};
 {\ar@{_{(}->} "3+";"2+"};
 {\ar@{->}_{\pi} "2-";"3-"}; 
\endxy
\end{equation}

\noindent
What accounts for the patterns we have seen? Notice that the dynamical
systems (\ref{eq.EqDiag}) are constructed out of {\em one} control
system $f:M\times M\to TM$.  At the same time, in each of the graphs
in (\ref{eq:Gdiag}), every vertex has exactly {\em one} incoming arc.
This is not a coincidence.  The rough idea for the technology which
generalizes this example is this: if we have a dynamical system made
up of repeated control system modules whose couplings are encoded in
graphs, then the appropriate maps of graphs lift to maps of dynamical
systems. Making this precise requires a number of constructions and
theorems; these make up the bulk of this paper.

\subsection{Main ideas of the paper}

We study {\em dependency} and {\em modularity} of networks and their effect on the concomitant dynamical systems.

By {\em dependency} we mean the following. 
Each node in a network corresponds to a single dynamical variable
living on a particular manifold. We then require that the variable
corresponding to node $i$ can depend on the variable corresponding to
node $j$ iff there is an edge in the graph from node $j$ to node $i$.
We give a more precise description of this requirement below and we
will denote the space all vector fields with this property by
$\S(G,\mathcal{P})$; see Section~\ref{sec:dependency} below for a
definition.

The rough idea of {\em modularity}
 is that
if we ever have multiple nodes of the graph that are ``the same'' and
have ``the same'' inputs, then we require that these nodes are
interchangeable in the dynamical system.  Speaking more precisely, we
will assume that in each network, each node has a ``type'' (it will,
in fact, be a manifold attached to this node which corresponds to the
phase space of the variables associated to that node), and if we ever
see two nodes, each with type $x$, with $n$ inputs in the graph, such
that these inputs are of type $x_1,\dots, x_n$, then 
the vector field defined on these two nodes must depend on their
inputs in exactly the same manner.  We will denote all vector fields
that respect this principle of modularity as $\V(G,\mathcal{P})$ and
give a precise definition of these vector fields in
Section~\ref{sec:modularity}.

In the example in the previous subsection, the principle of {\em
  dependency} tells us that the system living on the graph
in~\eqref{eq:favorite} 
must be of the form
\begin{equation}\label{eq:fgh}
  \dot x_1 = f(x_1,x_2),\quad \dot x_2 = g(x_2,x_1),\quad \dot x_3 = h(x_3,x_2),
\end{equation}
but $x_1$ cannot depend on $x_3$, for example.  The principle of {\em
  modularity} tells us that the functions $f,g,h$ must all be the
same, i.e. that
\begin{equation}\label{eq:fff}
  \dot x_1 = f(x_1,x_2),\quad \dot x_2 = f(x_2,x_1),\quad \dot x_3 = f(x_3,x_2),
\end{equation}
Therefore, all systems in $\S(G,\mathcal{P})$ must
satisfy~\eqref{eq:fgh}, but those in $\V(G,\mathcal{P})$ must satisfy
the stricter requirements~\eqref{eq:fff}.

\section{Networks and dynamics on networks}

The goal of this section is to define networks and dynamics on
networks in the context of continuous time dynamical systems.  It is
not uncommon to read that ``a network is a graph.'' This could not be
a complete story since by a network one usually means a collection of
interconnected subsystems.  We now make this precise.

\subsection{Graphs, manifolds and networks}

Throughout the paper {\em graphs} are directed multigraphs, possibly
with loops and multiple edges between nodes.  More precisely, we use
the following definition:
\begin{definition}\label{def:directed graph}
  A {\sf  graph} 
  $ G$ consists of two sets $ G_1$ (of arrows, or edges),
  $ G_0$ (of nodes, or vertices) and two maps $s, t:
   G_1 \to  G_0$ (source, target):
\[
 G = \{s,t\colon  G_1 \to  G_0\}.
\]
We write $ G = \{ G_1\toto G_0\}$. 
A graph $G$ is {\sf finite} if it has finitely many arrows and edges.
\end{definition}

\begin{definition}
  A {\sf map of graphs} $\varphi:A\to B$ from a graph $A$ to a graph
  $B$ is a pair of maps $\varphi_1:A_1\to B_1$, $\varphi_0:A_0\to B_0$
  taking edges of $A$ to edges of $B$, nodes of $A$ to nodes of $B$ so
  that for any edge $\gamma$ of $A$ we have
\[
\varphi_0 (s(\gamma)) = s(\varphi_1 (\gamma))\quad
\textrm{and} \quad \varphi_0 (t(\gamma)) = t(\varphi_1 (\gamma)).
\]
We will usually omit the indices 0 and 1 and write $\varphi(\gamma)$
for $\varphi_1 (\gamma)$ and $\varphi(a)$ for $\varphi_0 (a)$.
\end{definition}

\begin{remark}
  The collection of graphs and maps of graphs form a category
  $\Graph$.  The subcollection of finite graphs and maps of graphs
  forms a full subcategory $\FinGraph$.
\end{remark}

To construct a network from a graph we need to attach phase
spaces to its vertices.  Since we are interested in continuous time
dynamical systems, we choose phase spaces to be (finite dimensional
paracompact Hausdorff) manifolds.  Other choices, of course, may also
be reasonable, such as coordinate vector spaces $\R^n$ or manifolds
with corners.

\begin{definition}[Network]
  A {\sf network}  is a pair $(G, \cP)$ where $G$ is a
  finite graph and $\cP$ is a function that assigns to each node $a\in
  G_0$ of $G$ a manifold $\cP (a)$.  We refer to $\cP$ as a {\sf phase
    space function.}  Note that the target of the function $\cP$ is
  the collection $\Man$ of all manifolds: $\cP: G_0\to \Man$.

  A {\sf map of networks} from $(G,\cP)$ to $(G',\cP')$ is a map of
  graphs $\varphi:G\to G'$ so that
\[
\cP'\circ \varphi = \cP.
\]
\end{definition}
\begin{remark}
It is easy to see that composition of two maps of networks is again a
map of networks.  In other words networks  form a category.
We denote it by $\FinGraph/\Man$. 
\end{remark}

Given a network $(G,\cP)$ as defined above, a state of the network is
completely determined by the states of its nodes.  Hence the total
phase space of the network should be the product
\[
\PP (G,\cP) := \bigsqcap _{a\in G_0} \cP (a).
\]
Note, however, a small issue: there is no natural ordering on the
vertices of the graph $G$.  We could {\em choose} an ordering
$(a_1,\ldots, a_n)$ of the vertices and define the total phase space as
the Cartesian product
\[
\PP (G,\cP) := \prod _{i=1}^n \cP (a_i).
\]
However, it will be convenient {\em not} to choose an ordering of
vertices and use a slightly different notion of product.  This version
of the product is used, for example, in chemical reaction network
literature \cite{Gunawardena.LN}.

\begin{definition}\label{def:cat-prod}
 Given a family $\{M_s\}_{s\in S} $ of manifolds indexed by a finite
 set $S$, denote by $\bigsqcup _{s\in S}M_s$ their disjoint
 union\footnote{ The disjoint union may be defined by $\bigsqcup
   _{s\in S}M_s := \bigcup _{s\in S} (M_s\times \{s\})$.}.  The {\sf
   categorical product} of a finite family $\{M_s\}_{s\in S} $ of
 manifolds is the manifold
\[
\bigsqcap_{s\in S}M_{s} := \{x: S \to \bigsqcup _{s\in
  S}M_s \mid x(s)\in M_s \textrm{ for all }s\in S\}.
\]
\end{definition}

We note that for each index $s\in S$ we have projection maps $\pi_s:
\bigsqcap_{s'\in S}M_{s'}\to M_s$ are defined by
\[
\pi_s (x) = x(s).
\]
These projections are surjective submersions.

We denote $x(s)\in M_s$ by $x_s$ and think of it as $s^{th}$
``coordinate'' of an element $x\in \bigsqcap_{s\in S}M_{s}$.
Equivalently we may think of elements of the categorical product
$\bigsqcap_{s\in S}M_{s} $ as {\sf unordered} tuples $(x_s)_{s\in S}$
with $x_s\in M_s$.

\begin{remark}
  It is not hard to show that the categorical product, as defined
  above, has the following universal property: given a manifold $N$ and a
  family of smooth maps $\{f_s: N\to M_s\}_{s\in S}$ there is a unique
  map $f:N\to \bigsqcap_{s\in S}M_s$ with
\[
\pi_s \circ f = f_s \quad \textrm{ for all }s\in S.
\]
In fact categorical products are usually {\em defined} by this
universal property \cite{Awodey.book}.
\end{remark}

\begin{remark}
  For a family $\{M_s\}_{s\in S}$ of manifolds indexed by a finite set
  $S$ every ordering $\{s_1,\ldots, s_n\}$ of elements of $S$
identifies the categorical product $\bigsqcap _{s\in S} M_s$ (as a
manifold) with the Cartesian product $M_{s_1}\times \cdots \times
M_{s_n}$.
\end{remark}

We are now in position to state:
\begin{definition}[total phase space of a network  $(G,\cP)$]
  For a pair $(G,\cP)$ consisting of a graph $G$ and a phase space
  function $\cP$ we define the {\sf total phase space } of the network
$(G,\cP)$ to be the manifold
\[
\PP G\equiv \PP (G,\cP) := \bigsqcap _{a\in G_0} \cP (a),
\]
the categorical product of manifolds attached to the nodes of the
graph $G$ by the  phase space function $\cP$.%
\end{definition}

\begin{example}\label{example:1}
Consider the graph 
\[
\xy
(-30,0)*+{G =}="n1";
(-15,0)*++[o][F]{a}="1"; 
(15,0)*++[o][F]{b}="2"; 
{\ar@/_1.2pc/_{\alpha} "1";"2"};
{\ar@/^1.2pc/^{\beta} "1";"2"};
\endxy
\]
Define $\cP: G_0\to \Man$ by $\cP(a) = S^2$ (the two sphere) and
$\cP(b) = S^3$.  Then the total phase space $\PP (G, \cP)$ is the
Cartesian product $S^2 \times S^3$.
\end{example}

\begin{notation}
  If $G=\{\emptyset \toto \{a\}\}$ is a graph with one node $a$ and no
  arrows, we write $G=\{a\}$.  Then for any phase space function $\cP:
  G_0 =\{a\} \to \Man$ we abbreviate $\PP (\{\emptyset \toto \{a\} \},
  \cP:\{a\} \to \Man)= \PP (\{a\}, \cP:\{a\}\to \Man)$ as $\PP a$.
\end{notation}

\begin{proposition}\label{prop:phase-space-maps}
A map of networks $\varphi:(G,\cP)\to (G',\cP')$ naturally defines a map 
of corresponding total phase spaces
\[
\PP\varphi:  \PP G' \to \PP G.
\] 
\end{proposition}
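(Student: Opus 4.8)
The plan is to construct $\PP\varphi$ by reindexing coordinates along $\varphi_0$, and to establish its smoothness via the universal property of the categorical product recorded in the remark above. Recall that an element $x \in \PP G' = \bigsqcap_{b \in G'_0} \cP'(b)$ is a function $x \colon G'_0 \to \bigsqcup_{b} \cP'(b)$ with $x(b) \in \cP'(b)$ for every node $b$ of $G'$. Given such an $x$, I would define $\PP\varphi(x) \in \PP G$ by the formula $(\PP\varphi(x))(a) := x(\varphi_0(a))$ for each $a \in G_0$. The only point to check for this to make sense is that the right-hand side lands in the correct manifold: since $x(\varphi_0(a)) \in \cP'(\varphi_0(a))$ and the defining condition $\cP' \circ \varphi = \cP$ of a map of networks gives $\cP'(\varphi_0(a)) = \cP(a)$, we indeed have $(\PP\varphi(x))(a) \in \cP(a)$, so $\PP\varphi(x)$ is a bona fide element of $\PP G$. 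In the unordered-tuple notation this reads simply $\PP\varphi\big((x_b)_{b \in G'_0}\big) = (x_{\varphi_0(a)})_{a \in G_0}$.

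To see that this set-theoretic map is smooth without choosing any ordering of vertices, I would phrase it through the universal property. For each node $a \in G_0$, composing the projection $\pi'_{\varphi_0(a)} \colon \PP G' \to \cP'(\varphi_0(a))$ with the identification $\cP'(\varphi_0(a)) = \cP(a)$ yields a smooth map $f_a \colon \PP G' \to \cP(a)$ (smooth because the projections of a categorical product are surjective submersions). The universal property of $\PP G = \bigsqcap_{a \in G_0} \cP(a)$ applied to the family $\{f_a\}_{a \in G_0}$ then produces a unique smooth map $\PP\varphi \colon \PP G' \to \PP G$ characterized by $\pi_a \circ \PP\varphi = f_a = \pi'_{\varphi_0(a)}$ for all $a \in G_0$. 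Evaluating this characterization on the projections shows that the resulting $\PP\varphi$ agrees with the formula of the first paragraph, so the two constructions coincide and $\PP\varphi$ is smooth.

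Finally, the word ``naturally'' in the statement I read as the assertion that $\PP$ is (contravariantly) functorial, and I would verify this to close out the discussion: $\PP(\mathrm{id}_{(G,\cP)})$ satisfies $\pi_a \circ \PP(\mathrm{id}) = \pi_a$ for every $a$, so by uniqueness it equals $\mathrm{id}_{\PP G}$; and for a composable pair $(G,\cP) \xrightarrow{\varphi} (G',\cP') \xrightarrow{\psi} (G'',\cP'')$ both $\PP\varphi \circ \PP\psi$ and $\PP(\psi \circ \varphi)$ satisfy the same defining equations $\pi_a \circ (-) = \pi''_{\psi_0(\varphi_0(a))}$, whence they coincide by the uniqueness clause of the universal property. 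The one genuinely conceptual point --- and the place I would be most careful --- is the contravariance: the map of networks points $G \to G'$, while $\PP\varphi$ points $\PP G' \to \PP G$. This reversal is forced by the product (rather than coproduct) structure of the total phase space, and the type condition $\cP' \circ \varphi = \cP$ is exactly what lets a coordinate of $\PP G'$ indexed by $\varphi_0(a)$ be reinterpreted as a coordinate of $\PP G$ indexed by $a$. Everything else is routine once elements of the categorical product are viewed as functions on the vertex set.
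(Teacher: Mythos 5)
Your proof is correct and follows essentially the same route as the paper: both define $\PP\varphi$ via the universal property of the categorical product, taking the component at $a\in G_0$ to be the projection $\pi'_{\varphi(a)}$ reinterpreted through the type condition $\cP'\circ\varphi=\cP$. Your explicit coordinate formula and the functoriality check are harmless additions (the latter is stated in the paper as Remark~\ref{rmrk:2.1.15}), but the core argument is identical.
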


\begin{proof}
  We use the universal property of the product $\PP G = \bigsqcap
  _{a\in G_0} \cP (a)$: to define a map $f$ from any manifold $N$ to
  $\PP G$ it is enough to define a family of maps $\{f_a: N\to \cP
  (a)\}_{a\in G_0}$.
For any node $a'$ of $G'$ we have the canonical projection 
\[
\pi'_{a'}:\PP G'\to \cP' (a').
\]
We therefore  define
\[
(\PP \varphi)_a := \pi'_{\varphi (a)}:\PP G' \to \cP' (\varphi(a) ) = \cP (a) 
\]
for all $a\in G_0$.
\end{proof}
\begin{example}\label{ex:1.0}
  Suppose $G$ is a graph with two nodes $a, b$ and no edges, $G'$ is a
  graph with one node $\{c\}$ and no edges, $\cP'(c)$ is a manifold
  $M$, $\varphi:G\to G'$ is the only possible map of graphs (it sends
  both nodes to $c$), and $\cP:G_0\to \Man$ is given by $\cP (a) = M =
  \cP(b)$ (so that $\cP'\circ \varphi = \cP$.  Then $\PP G'\simeq M$, 
\[
\PP G = \{ (x_a, x_b) \mid x_a\in \cP (a), x_b \in \cP (b)\} \simeq M\times M
\]
and $\PP\varphi: M\to M\times M$ is the unique map with $(\PP \varphi
(x))_a = x$ and $(\PP \varphi (x))_b = x$ for all $x\in \PP G'$.  Thus
$\PP \varphi: M\to M\times M$ is the diagonal map $x\mapsto (x,x)$.
\end{example}

\begin{example}
  Let $(G,\cP)$, $(G', \cP')$ be as in Example~\ref{ex:1.0} above and
  $\psi: (G',\cP')\to (G,\cP)$ be the map that sends the node $c$ to
  $a$.  Then $\PP \psi: \PP G \to \PP G'$ is the map that sends $(x_a,
  x_b)$ to $x_a$.
\end{example}

\begin{remark}
  It is not hard to show that if a map of networks $\varphi:(G,\cP)\to
  (G',\cP')$ is {\sf surjective} on vertices then $\PP \varphi:\PP G'
  \to \PP G$ is an {\sf embedding}.  If, on the other hand, $\varphi$
  is {\sf injective} on vertices, then $\PP \varphi$ is a {\sf
    surjective submersion}.
\end{remark}

\begin{remark}\label{rmrk:2.1.15}
  The total phase space map $\PP: \FinGraph/\Man \to \Man$ is a
  contravariant functor: given two maps of networks
\[
(G,\cP) \xrightarrow{\varphi} (G',\cP') \xrightarrow{\psi} (G'',. \cP'')
\]
we have 
\begin{equation}
\PP (\psi \circ \varphi) = \PP \varphi \circ \PP \psi.
\end{equation}
To indicate that $\PP$ reverses the direction of maps we 
now and subsequently  write
\[
\PP : \op{ (\FinGraph/\Man)} \to \Man.
\] 
The superscript $\op{}$ stands for the opposite category, i.e., the
category with the same objects but all the arrows reversed.
\end{remark}

\subsection{Open systems and their interconnections}

\noindent
Having set up a consistent way of assigning phase spaces to graphs
(that is, having set up networks of manifolds) we now take up a
construction of continuous time dynamical systems compatible with the
structure of the network.  We build vector fields on total phase
spaces of networks by interconnecting appropriate open systems.  Our
notion of interconnection is borrowed, to some extent, from the
control theory literature. See, for example, Willems~\cite{willems}.
We therefore start by recalling a definition of an open (control)
systems, which is essentially due to Brockett \cite{Brockett}.

\begin{definition}
A {\sf continuous time control system} (or an {\sf open system}) on a
manifold $M$ is a surjective submersion $p:Q\to M$ and a smooth map $
F\colon Q \to TM$ so that
\[
F(q) \in T_{p(q)} M
\]
 for
  all $q\in Q $.
 (cf., for example, \cite{Pappas}).
That is, the following diagram 
$
\xy (-10, 6)*+{Q}="1"; 
(6, 6)*+{TM} ="2"; 
(6,-3)*+{M}="3"; 
{\ar@{->}_{ p}  "1";"3"}; 
{\ar@{->}^{F} "1";"2"}; 
{\ar@{->}^{\pi} "2";"3"}; \endxy
$
commutes, where $\pi:TM \to M$ is the canonical projection.
\end{definition}

\begin{empt} Given a manifold $U$ of ``control
variables'' we may consider control systems of the form
\begin{equation}\label{eq:4}
F:M\times U\to TM.
\end{equation}
Here the submersion $p:M\times U \to M$ is given by
\[
p(x,u) = x.  
\]
The collection of all such control systems forms a vector
space that we denote by $\Ctrl (M\times U\to M)$:
\[
\Ctrl (M\times U\to M):= \{F:M\times U\to TM \mid F(x,u)\in T_x M\}.
\]

\end{empt}

\begin{notation}[Space of sections of a vector bundle]
Given a vector bundle $E\to M$ we denote the {\sf  space of sections} of $E\to M$ by $\Gamma E$ or by $\Gamma (E)$.
\end{notation}

Now suppose we are given a finite family $\{F_i:M_i \times U_i \to
TM_i\}_{i=1}^N$ of control systems and we want to somehow interconnect
them to obtain a closed system $\scI (F_1, \ldots, F_N)$.  This closed
system is a vector field on the product $\bigsqcap_i M_i$.  %
That is, 
$\scI (F_1, \ldots, F_N) $ is a section of the tangent bundle 
$T(\bigsqcap_i M_i)) \to \bigsqcap_i M_i$.
What additional data do we need to define the interconnection map
\[
\scI: \bigsqcap_i \Ctrl(M_i\times U_i \to M_i) \to 
\Gamma\, (T(\bigsqcap_i M_i))\quad ?
\]
The answer is given by the following proposition:

\begin{proposition} \label{prop:3.2} Given a family $\{p_j : M_j
  \times U_j \to M_j\}_{j=1}^N$ of projections and
  a family of smooth maps $\{s_j: \bigsqcap M_i \to M_j\times U_j\}$
  so that the diagrams
\[
\xy (-10, 6)*+{M_j \times U_j}="1"; 
(-10,-6)*+{\bigsqcap M_i}="3"; 
(10,-6)*+{ M_j}="4"; 
{\ar@{->}^{ p_j}  "1";"4"}; 
{\ar@{->}^{s_j} "3";"1"};
{\ar@{->}_{pr_j} "3";"4"};
 \endxy
\]
commute for each index $j$, there is an interconnection map $\scI$
making the diagrams
\[
\xy 
(-30, 6)*+{\bigsqcap_i \Ctrl(M_i\times U_i \to M_i)}="1"; 
(30, 6)*+{\Gamma
(T(\bigsqcap_i M_i))} ="2"; 
(-30,-6)*+{\Ctrl(M_j\times U_j \to M_j)}="3"; 
(30,-6)*+{\Ctrl (\bigsqcap_i M_i \xrightarrow {pr_j} M_j)}="4";
{\ar@{->}^{\scI} "1";"2"}; 
{\ar@{->}^{\varpi_j =D(pr_j)\circ -} "2";"4"};
{\ar@{->}^{\pi_j} "1";"3"};
{\ar@{->}_{\scI_j} "3";"4"};
 \endxy
\]
commute for each $j$.  The components $\scI_j$ of the
interconnection map $\scI$ are defined by $ \scI_j (F_j) := F_j \circ
s_j$ for all $j$.
\end{proposition}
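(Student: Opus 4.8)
The plan is to exploit the universal property of the categorical product, pushed up to the level of tangent bundles, in order to reduce the construction of $\scI$ to the prescription of its components $\varpi_j\circ\scI$, and then to verify that the forced prescription $\scI_j(F_j)=F_j\circ s_j$ is legitimate and assembles into an honest smooth section.

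First I would establish the key structural fact: a section $V$ of $T(\bigsqcap_i M_i)\to\bigsqcap_i M_i$ is the same data as a family of smooth maps $\{V_j:\bigsqcap_i M_i\to TM_j\}$ with $V_j(x)\in T_{pr_j(x)}M_j$ for every $x$, i.e. a family with $V_j\in\Ctrl(\bigsqcap_i M_i\xrightarrow{pr_j}M_j)$. Indeed the tangent functor $T$ preserves finite products, so $T(\bigsqcap_i M_i)\cong\bigsqcap_i TM_i$, with the bundle projection corresponding to $\bigsqcap_i(TM_i\to M_i)$ and with $D(pr_j)$ corresponding to the $j$-th projection $\bigsqcap_i TM_i\to TM_j$. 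By the universal property of the categorical product, giving $V:\bigsqcap_i M_i\to\bigsqcap_i TM_i$ is the same as giving the family $V_j:=D(pr_j)\circ V$, and $V$ is a section precisely when $V_j(x)\in T_{pr_j(x)}M_j$ for each $j$. In the notation of the diagram this says exactly that $\varpi_j=D(pr_j)\circ-$ assembles into a bijection from $\Gamma(T(\bigsqcap_i M_i))$ onto the set of families $(V_j)_j$ with $V_j\in\Ctrl(\bigsqcap_i M_i\xrightarrow{pr_j}M_j)$.

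Next I would define $\scI$ by declaring its $j$-th component to be $V_j:=F_j\circ s_j$. For this to determine an element of $\Gamma(T(\bigsqcap_i M_i))$ through the bijection above, I must check that $F_j\circ s_j\in\Ctrl(\bigsqcap_i M_i\xrightarrow{pr_j}M_j)$, and this is exactly where the commuting-triangle hypothesis enters. For $x\in\bigsqcap_i M_i$ the relation $p_j\circ s_j=pr_j$ forces the $M_j$-component of $s_j(x)$ to be $pr_j(x)$, so $s_j(x)=(pr_j(x),u)$ for some $u\in U_j$; then the defining property of the control system $F_j$ gives $F_j(s_j(x))=F_j(pr_j(x),u)\in T_{pr_j(x)}M_j$. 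Smoothness of $F_j\circ s_j$ is clear, so it is a valid component, and the family $(F_j\circ s_j)_j$ defines a unique $\scI(F_1,\dots,F_N)\in\Gamma(T(\bigsqcap_i M_i))$.

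Finally, commutativity of the square is immediate by construction: $\varpi_j(\scI(F_1,\dots,F_N))=D(pr_j)\circ\scI(F_1,\dots,F_N)=F_j\circ s_j$ by the way $\scI$ was defined, while $\scI_j(\pi_j(F_1,\dots,F_N))=\scI_j(F_j)=F_j\circ s_j$; these agree for each $j$. The main obstacle is really the first step --- making precise that a vector field on a categorical product is uniquely reconstructible from its $pr_j$-components, which rests on $T$ preserving products together with the universal property. Once that identification is in hand the definition of $\scI$ is forced, the check that $F_j\circ s_j$ lands in the fiber $T_{pr_j(x)}M_j$ is a direct consequence of $p_j\circ s_j=pr_j$, and commutativity holds tautologically.
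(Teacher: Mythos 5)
Your proof is correct and follows essentially the same route as the paper's: identify $\Gamma(T(\bigsqcap_i M_i))$ with the product $\bigsqcap_j \Ctrl(\bigsqcap_i M_i \xrightarrow{pr_j} M_j)$, reduce the construction of $\scI$ to its components via the universal property, and set $\scI_j(F_j) := F_j\circ s_j$. The only difference is thoroughness: you explicitly verify that $F_j\circ s_j$ lands in $\Ctrl(\bigsqcap_i M_i \xrightarrow{pr_j} M_j)$ using $p_j\circ s_j = pr_j$ --- the one place the commuting-triangle hypothesis actually enters --- whereas the paper leaves that check implicit.
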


\begin{proof}
The space of vector fields
$\Gamma (T(\bigsqcap_i M_i))$ on the product $\bigsqcap _i M_i$ is the product
of vector spaces $\Ctrl (\bigsqcap_i M_i \to M_j)$:
\[
\Gamma (T(\bigsqcap_i M_i)) = \bigsqcap _j \Ctrl (\bigsqcap_i M_i
\xrightarrow{pr_j}M_j).
\]
In other words a vector field $X $ on the product $\bigsqcap _i M_i$
is a tuple $X = (X_1, \ldots, X_N)$, where 
\[
 X_j := D (pr_j)\circ X.
\]
($D (pr_j):T \bigsqcap M_i \to TM_j)$ denotes the differential of the
canonical projection $pr_j: \bigsqcap M_i \to M_j$.)  Each component
$X_j : \bigsqcap _i M_i \to TM_i$ is a control system.

To define a map from a vector space into a product of
vector spaces it is enough to define a map into each of the factors.
We have canonical projections
\[
\pi_j: \bigsqcap_i \Ctrl(M_i\times U_i \to M_i) \to
\Ctrl(M_j\times U_j\to M_j), \quad j=1,\ldots, N.
\]
Consequently to define the interconnection map $\scI$ it is enough to
define the maps
\[
\scI_j: \Ctrl(M_j\times U_j\to M_j) \to \Ctrl (\bigsqcap_i M_i
\xrightarrow {pr_j} M_j).
\]
for each index $j$.
We therefore define the maps $\scI_j: \Ctrl(M_j\times U_j\to M_j)
\to \Ctrl (\bigsqcap_i M_i \xrightarrow {pr_j} M_j)$, $1\leq j\leq N$, by
\[
\scI_j (F_j) := F_j \circ s_j.
\]
\end{proof}

\begin{remark}\label{rmrk:2.21}
It will be useful for us to remember that the canonical projections
\[
\varpi_j: \Gamma (T\bigsqcap M_i))\to \Ctrl (\bigsqcap M_i \to M_j)
\]
are given by
\[
\varpi_j (X) = D (pr_j)\circ X,
\]
where $D (pr_j):T \bigsqcap M_i \to TM_j$ are the differential of the
canonical projections $pr_j: \bigsqcap M_i \to M_j$.
\end{remark}

\subsection{Interconnections and graphs}
We next explain how networks of manifolds give rise to interconnection
maps.  To do this precisely it is useful to have a notion of  {\em
  input trees} of a directed graph.  Given a graph, an input tree
$I(a)$ of a vertex $a$ is roughly, the vertex itself and all of the
arrows leading into it.  We want to think of this as a graph in its
own right, as follows.

\begin{definition}[Input tree]\label{def:input-tree}
  The {\em input tree} $I(a)$ at a vertex $a$ of a graph $ G$ is the
  graph with the set of vertices $I(a)_0$ given by
  \begin{equation*}
    I(a)_0 := \{a\}\sqcup t\inv (a);
  \end{equation*} 
  where, as before, the set $t\inv (a)$ is the set of arrows in $
  G$ with target $a$.  The set of edges $I(a)_1$ of the input tree is
  the set of pairs
  \begin{equation*}
    I(a)_1 := \{(a, \gamma)\mid \gamma \in G_1, \,\,\,t(\gamma) =a \},
  \end{equation*}
  and the source and target maps $I(a)_1\toto I(a)_0$ are defined by
  \begin{equation*}
    s(a, \gamma) = \gamma \quad\textrm{and}\quad  t(a, \gamma) = a.
  \end{equation*}
  In pictures,
  \[
  \xy  (-8,0)*++[o][F]{\gamma}="x";
  (8, 0)*++[o][F]{a}="z";
  {\ar@/^0.5pc/^{(a,\gamma) } "x";"z"};
  \endxy .
  \]
\end{definition}

\begin{example} \label{example:2}
Consider the graph $G=\xy
(-15,0)*++[o][F]{a}="1"; 
(15,0)*++[o][F]{b}="2"; 
{\ar@/_1.2pc/_{\alpha} "1";"2"};
{\ar@/^1.2pc/^{\beta} "1";"2"};
\endxy$ as in Example~\ref{example:1}.  Then the input tree $I(a)$ is
the graph with one node $a$ and no edges: 
\[
I(a) = \xy (0,0)*++[o][F]{a}\endxy.  
\]
The input tree $I(b)$ has three nodes and
two edges:
 \[
I(b) =   \xy  
(-8,4)*++[o][F]{\alpha }="1";
(-8,-4)*++[o][F]{\beta }="2";  
(8, 0)*++[o][F]{ b}="z";
  {\ar@/^0.5pc/^{(b,\alpha) } "1";"z"};
{\ar@/_0.5pc/_{(b,\beta) } "2";"z"};
  \endxy .
  \]
\end{example}

\begin{remark}\label{remark:xi}
 For each node $a$ of a graph $G$  we have a natural map of graphs 
\[
\xi= \xi_a : I(a)\to  G, \quad \xi_a (a, \gamma) = \gamma.
\] 
We stress that this need not be injective on nodes.  For instance in
Example~\ref{example:2} the map $\xi_b: I(b) \to G$ sends the {\em
  nodes} $\alpha$ and $\beta$ to the same node $a$.
\end{remark}

\begin{example}\label{exa:Ctrl}
Consider the graph

\begin{equation}\label{eq:four}
G =\xy
(-20,0)*++[o][F]{1}="1"; 
(0,0)*++[o][F]{2}="2"; 
(20,0)*++[o][F]{3}="3"; 
(40,0)*++[o][F]{4}="4";
{\ar@{->}@/_1pc/^{\beta} "1";"2"};
{\ar@/^1pc/^{\alpha} "1";"2"};
{\ar@{->}^{\gamma} "2";"3"};
{\ar@{->}@/_1pc/^{\zeta} "3";"4"};
{\ar@/^1pc/^{\epsilon} "3";"4"};
{\ar@{->}@/_3pc/^{\delta} "1";"4"};
\endxy,
\end{equation}
The four input trees of $G$ are the graphs
\begin{equation*}
  \xy
(0,0)*++[o][F]{1}="1"; 
\endxy,\quad
\xy
(0,5)*++[o][F]{\alpha}="1a"; 
(0,-5)*++[o][F]{\beta}="1b"; 
(10,0)*++[o][F]{2}="2";
{\ar@{->} "1a";"2"};
{\ar@{->} "1b";"2"}; 
\endxy, \quad
\xy
(0,0)*++[o][F]{\gamma}="2"; 
(10,0)*++[o][F]{3}="3";
{\ar@{->} "2";"3"};
\endxy, \quad
\xy
(0,10)*++[o][F]{\epsilon}="3a"; 
(0,0)*++[o][F]{\zeta}="3b"; 
(0,-10)*++[o][F]{\delta}="1"; 
(10,0)*++[o][F]{4}="4";
{\ar@{->} "3a";"4"};
{\ar@{->} "3b";"4"}; 
{\ar@{->} "1";"4"}; 
\endxy.
\end{equation*}
\end{example}

\begin{remark}\label{rmrk:2.3.5}
  The input tree $I(a)$ of a graph $G$ is a directed tree of height 1:
  for any vertex $x$ of $I(a)$ with $x\not =a$ there is exactly one edge
  with source $x$ and target $a$. Also $a$ is the only vertex of
  $I(a)$ which is not a source of any edge (it's a {\sf root} of
  $I(a)$), and all the other vertices of $I(a)$ cannot be targets of
  any edges (they are {\sf leaves} of $I(a)$).
\end{remark}
\begin{remark}\label{rmrk:2.3.6}
It follows from Remark~\ref{rmrk:2.3.5} above that if $\varphi: I(a)
\to I(b)$ is an isomorphism of two input trees (these graphs may be
input trees of two different graphs) then necessarily $\varphi(a) = b$
\end{remark}

\begin{remark}\label{rmrk:2.3.7}
  Given a network $(G,\cP)$ and a map of graphs $\varphi: H\to G$ we
  get a map of networks
\[
\varphi: (H,\cP\circ \varphi)\to (G, \cP),
\]
hence a map of manifolds
\[
\PP \varphi: \PP G\to \PP H.
\]
\end{remark}

\begin{empt}\label{empt:2.24}
  Let $(G,\cP)$ be a network and
  let $a$ be a node of $G$.  Consider the graph $\{a\}$ with one
  node and no arrows. Denote the inclusion of $\{a\}$ in $G$ by
  $\iota_a$ and the inclusion into its input tree $I(a)$ by $j_a$.
  Then the diagram 
\[
\xy
(-8, 10)*+{ \{a\}} ="1"; 
(8, 10)*+{ I(a)} ="2"; 
(0, -2)*+{G}="3";
{\ar@{->}^{ j_a} "1";"2"};
{\ar@{->}_{ \iota_a} "1";"3"};
{\ar@{->}^{\xi_a} "2";"3"};
\endxy
\]
commutes.  By Remarks~\ref{rmrk:2.3.7} and \ref{rmrk:2.1.15} we have a
commuting diagram of maps of manifolds
\[
\xy
(-10, 10)*+{ \PP\{a\}} ="1"; 
(10, 10)*+{ \PP I(a)} ="2"; 
(0, -5)*+{\PP G}="3";
{\ar@{<-}^{ \PP j_a} "1";"2"};
{\ar@{<-}_{ \PP \iota_a} "1";"3"};
{\ar@{<-}^{\PP \xi_a} "2";"3"};
\endxy
\]
\end{empt}

\begin{empt}\label{empt:2.25}
  Let us now examine more closely the map $\PP j_a: \PP I(a)\to \PP a$
  in \ref{empt:2.24} above.  Since the set of nodes $I(a)_0$ of the input tree $I(a)$ is the disjoint union
\[
I(a)_0 = \{a\} \sqcup t\inv (a),
\]
and since $\xi_a (\gamma)$ is the source $ s (\gamma)$ for any $\gamma \in t\inv (a) \subset I(a)_0$,
we have
\[
\PP I(a) = \cP(a) \times \bigsqcap _{\gamma \in t\inv (a)} \cP (s(\gamma)).
\]
Since $j_a:\{a\} \to I(a)_0 = \{a\} \sqcup t\inv (a)$ is the inclusion, 
\[
\PP j_a : \PP I(a) \to \PP a
\]
is the projection 
\[
 \cP(a) \times \bigsqcap _{\gamma \in t\inv (a)} \cP (s(\gamma)) \to \cP(a).
\]
Similarly 
\[
\PP \iota_a: \PP G \to \PP a
\]
is the projection
\[
\bigsqcap _{b\in G_0} \cP (b) \to \cP(a).
\]
\end{empt}
Putting \ref{empt:2.24} and \ref{empt:2.25} together we get

\begin{proposition}\label{prop:2.26}
  For each node $a$ of a network $(G,\cP)$ the
  diagrams 
\[
\xy
(-40, 10)*+{ \PP I(a)} ="0"; 
(-10, 10)*+{ 
\cP (a)\times \bigsqcap _{\gamma \in t\inv (a)}\cP (s (\gamma))} ="1"; 
(30, 10)*+{ \cP(a)} ="2"; 
(-10, -10)*+{\bigsqcap_{b\in G_0} \cP(b)}="3";
{\ar@{=}^{ } "1";"0"};
{\ar@{->}^(.75){ \PP j_a} "1";"2"};
{\ar@{->}_{ \PP \iota_a} "3";"2"};
{\ar@{->}^{\PP \xi_a} "3";"1"};
\endxy
\] 
commute.
\end{proposition}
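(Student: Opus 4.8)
The plan is to recognize this proposition as nothing more than the assembly of the two preceding paragraphs, \ref{empt:2.24} and \ref{empt:2.25}, via the functoriality of $\PP$. The essential point is that the corresponding triangle of \emph{graph} maps already commutes, and that $\PP$ is a contravariant functor, so applying it yields the asserted commuting triangle of manifolds with the arrows in the stated directions. The identifications of objects and maps are then read off from \ref{empt:2.25}.

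First I would verify that the triangle of graph maps in \ref{empt:2.24} commutes, i.e.\ that $\xi_a \circ j_a = \iota_a$ as maps $\{a\} \to G$. Since $\{a\}$ has a single vertex and no edges, it suffices to check the image of the vertex $a$. The inclusion $j_a$ sends $a$ to the root $a \in I(a)_0 = \{a\} \sqcup t\inv(a)$, and, as recorded in Remark~\ref{remark:xi}, the map $\xi_a$ fixes the root: on each edge $(a,\gamma)$ we have $\xi_a(a,\gamma) = \gamma$, which forces $\xi_a(a) = t(\gamma) = a$ (and each leaf $\gamma$ to map to $s(\gamma)$). Hence $(\xi_a \circ j_a)(a) = a = \iota_a(a)$, so the triangle of graphs commutes.

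Next I would apply $\PP$. By Remark~\ref{rmrk:2.1.15} the total phase space assignment is a contravariant functor, so contravariance turns $\xi_a \circ j_a = \iota_a$ into
\[
\PP j_a \circ \PP \xi_a = \PP(\xi_a \circ j_a) = \PP \iota_a,
\]
which is precisely the commutativity asserted in the proposition. Finally I would substitute the explicit descriptions recorded in \ref{empt:2.25}: the identification $\PP I(a) = \cP(a) \times \bigsqcap_{\gamma \in t\inv(a)} \cP(s(\gamma))$ furnishes the equality arrow in the top-left corner, $\PP j_a$ is the projection onto the $\cP(a)$-factor, and $\PP \iota_a$ is the projection $\bigsqcap_{b \in G_0} \cP(b) \to \cP(a)$.

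No genuine obstacle arises here, since all the real content lives in the preceding two paragraphs; the proposition is a summary. The only point demanding a moment's care is the reversal of arrows coming from the contravariance of $\PP$: one must confirm that it is the composite $\PP j_a \circ \PP \xi_a$ (in that order, $\PP G \to \PP I(a) \to \cP(a)$), and not some other, which equals $\PP \iota_a$. This matches the directions drawn in the proposition's diagram, and so completes the argument.
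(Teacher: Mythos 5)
Your proposal is correct and follows exactly the paper's own route: the paper derives this proposition by ``putting together'' the two preceding paragraphs (\ref{empt:2.24} and \ref{empt:2.25}), i.e.\ the commuting triangle of graph maps $\xi_a \circ j_a = \iota_a$, the contravariant functoriality of $\PP$ from Remarks~\ref{rmrk:2.3.7} and~\ref{rmrk:2.1.15}, and the explicit identifications of $\PP I(a)$, $\PP j_a$, and $\PP \iota_a$. Your only addition is an explicit verification that the graph triangle commutes (which the paper asserts without proof in \ref{empt:2.24}), and that verification is sound.
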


\begin{example}\label{example:3}
Suppose $G= 
\xy
(-15,0)*++[o][F]{a}="1"; 
(15,0)*++[o][F]{b}="2"; 
{\ar@/_1.2pc/_{\alpha} "1";"2"};
{\ar@/^1.2pc/^{\beta} "1";"2"};
\endxy$ is a graph as in Example~\ref{example:1} and suppose $\cP:G_0
\to \Man$ is a phase space function.  Then 
\[
\PP I(b) = \cP (a)
\times \cP (a) \times \cP (b),
\]
 $\PP j_b$ is the projection $\cP (a)
\times \cP (a) \times \cP (b) \to \cP (b)$ and 
\[
\Ctrl (\PP I(b) \to
\PP b) = \Ctrl (\cP (a) \times \cP (a) \times \cP (b) \to \cP
(b)).
\]  On the other hand $\PP I(a) = \cP (a)$, $\PP j_a: \cP(a)\to
\cP (a)$ is the identity map and 
\[
\Ctrl (\PP I(a)\to \PP a) = \Gamma (T\cP(a)), 
\]
the space of sections of the tangent bundle $T\cP(a)$, that is, the
space of vector fields on the manifold $\cP (a)$.
\end{example}

\subsection{Dependency}\label{sec:dependency}

Given a network $(G,\cP)$ we have a product of vector spaces 
\[
 \bigsqcap _{a\in G_0} \Ctrl( \PP I(a)\to \PP a).
\]
The elements of the product are unordered tuples of $(w_a)_{a\in G_0}$
of control systems (cf.\ \ref{def:cat-prod}).  We may think of them as
sections of the vector bundle
\begin{equation}
\Control(G,\cP):=\bigsqcup _{a\in  G_0}\Ctrl( \PP I(a)\to \PP a) \to G_0 
\end{equation}
over the vertices of $G$.  
This is the main reason for thinking of the collection of infinite
dimensional vector spaces $\{\Ctrl( \PP I(a)\to \PP a)\}_{a\in G_0}$
as a vector bundle over a finite set.  It will be convenient to have a
notation for the space of sections of the bundle $\Control(G,\cP) \to
G_0$.

\begin{definition} \label{def:S}
Let  $(G,\cP)$  be a network, as above.
We refer to the bundle $\Control(G,\cP)\to G_0$ as the {\sf control
  bundle} of the network $(G,\cP)$.  We call the sections $(w_a)_{a\in
  G_0}$ of the control bundle {\sf virtual vector fields} on the
network $(G,\cP)$. We denote the space of sections by $\S(G,\cP)$.
Thus
\[
\S(G,\cP) := \bigsqcap _{a\in G_0}\Ctrl( \PP I(a)\to \PP a).
\]
\end{definition}

We now argue that an application of the interconnection map $\scI:\S
(G,\cP) \to \Gamma T(\PP (G,\cP))$ turns these ``virtual vector
fields'' into actual vector fields on the total phase space $\PP
(G,\cP)$ of the network.  Indeed observe that
Propositions~\ref{prop:3.2} and \ref{prop:2.26} give us
\begin{theorem}\label{thm:3.8}
  For a  network $(G,\cP)$  there exists a natural
  interconnection map
\[
\scI: \bigsqcap_{a\in G_0} \Ctrl (\PP I(a)\to \PP a) \to \Gamma (T\PP G))
\]
with 
\[
\varpi_a \circ \scI ( (w_b)_{b\in G_0}) =  w_a \circ \PP j_a
\]
for all nodes $a\in G_0$.  Here $\varpi_a: \Gamma (T\PP G) \to
\Ctrl( \PP G_0 \xrightarrow{\PP \iota_a} \PP a)$ are the projection
maps defined by $\varpi_a (X) = D (\PP \iota_a)\circ X$
(q.v.~Remark~\ref{rmrk:2.21}).
\end{theorem}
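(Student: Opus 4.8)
The plan is to realize Theorem~\ref{thm:3.8} as a direct application of the general interconnection construction of Proposition~\ref{prop:3.2}, with the gluing data supplied by Proposition~\ref{prop:2.26}. First I would set up the dictionary between the two statements. Index the family in Proposition~\ref{prop:3.2} by the node set $G_0$, and for each $a\in G_0$ put $M_a := \cP(a) = \PP a$ and $U_a := \bigsqcap_{\gamma\in t\inv(a)}\cP(s(\gamma))$, so that by \ref{empt:2.25} the product $M_a\times U_a$ is exactly $\PP I(a)$ and the projection $p_a\colon M_a\times U_a\to M_a$ is the map $\PP j_a\colon\PP I(a)\to\PP a$. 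The ambient categorical product $\bigsqcap_{a\in G_0} M_a$ is then the total phase space $\PP G$, the projection $pr_a\colon\PP G\to\PP a$ is $\PP\iota_a$, and the role of the section map $s_a\colon\bigsqcap_i M_i\to M_a\times U_a$ is played by $\PP\xi_a\colon\PP G\to\PP I(a)$.

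With this dictionary in place, the only hypothesis of Proposition~\ref{prop:3.2} that needs checking is that the triangles commute, i.e.\ that $\PP j_a\circ\PP\xi_a = \PP\iota_a$ for every node $a$. But this is precisely the content of Proposition~\ref{prop:2.26}. I would also record that $\PP j_a$ is a surjective submersion (being a projection of a categorical product, as observed after Definition~\ref{def:cat-prod}), so that each $\Ctrl(\PP I(a)\to\PP a)$ is genuinely a space of control systems in the required sense.

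Proposition~\ref{prop:3.2} then produces the interconnection map
\[
\scI\colon\bigsqcap_{a\in G_0}\Ctrl(\PP I(a)\to\PP a)\to\Gamma(T\PP G),
\]
whose $a$-th component is $\scI_a(w_a)=w_a\circ\PP\xi_a$, so that $\varpi_a\circ\scI((w_b)_{b\in G_0}) = w_a\circ\PP\xi_a$, which is the component formula asserted in the theorem. The commuting triangle from Proposition~\ref{prop:2.26} is exactly what guarantees that $w_a\circ\PP\xi_a$ is a well-defined element of $\Ctrl(\PP G\xrightarrow{\PP\iota_a}\PP a)$: for $x\in\PP G$ we have $w_a(\PP\xi_a(x))\in T_{\PP j_a(\PP\xi_a(x))}\cP(a)=T_{\PP\iota_a(x)}\cP(a)$, so the vector sits over the correct base point.

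Finally, for the word \emph{natural} I would observe that every ingredient above --- the categorical products $\PP G$ and $\PP I(a)$, together with the structure maps $\PP j_a$, $\PP\iota_a$, and $\PP\xi_a$ --- is canonical and uses no choice of ordering of the vertices of $G$, so that $\scI$ depends on $(G,\cP)$ alone. I do not expect any real obstacle here: the theorem is a bookkeeping assembly of two results already proved, and the single point that requires care is matching the commuting triangle of Proposition~\ref{prop:2.26} to the triangle hypothesis of Proposition~\ref{prop:3.2} and confirming that $\PP\xi_a$ (rather than some other map out of $\PP G$) is the correct section, which the commuting square in \ref{empt:2.24} makes transparent.
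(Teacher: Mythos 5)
Your proposal is correct and is essentially identical to the paper's own proof, which consists precisely of the observation that Propositions~\ref{prop:3.2} and \ref{prop:2.26} combine via the dictionary you describe ($M_a = \PP a$, $U_a = \bigsqcap_{\gamma\in t\inv(a)}\cP(s(\gamma))$, $p_a = \PP j_a$, $pr_a = \PP\iota_a$, $s_a = \PP\xi_a$). One remark: your derived component formula $w_a\circ\PP\xi_a$ is the correct one (it is the only composition that typechecks, since $\PP j_a$ maps $\PP I(a)$ to $\PP a$ rather than into the domain of $w_a$, and it agrees with Example~\ref{example:4}), so the expression $w_a\circ\PP j_a$ appearing in the statement of Theorem~\ref{thm:3.8} is a typo for $w_a\circ\PP\xi_a$, and you have in fact silently corrected the statement rather than matched it literally.
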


\begin{example}\label{example:4}
Consider the graph $G= \xy
(-15,0)*++[o][F]{a}="1"; 
(15,0)*++[o][F]{b}="2"; 
{\ar@/_1.2pc/_{\alpha} "1";"2"};
{\ar@/^1.2pc/^{\beta} "1";"2"};
\endxy$ as in Example~\ref{example:1} with a phase space function
$\cP: G_0\to \Man$.  Then the vector field
\[
X = \scI (w_a, w_b): \cP (a)\times \cP (b) \to T\cP (a)\times T\cP (b)
\]
is of the form
\[
X(x,y) = (w_a (x), w_b (x,x, y)) \quad \textrm{ for all } (x,y) \in
\cP (a)\times \cP (b).
\]
If
$G= \xy
(-15,0)*++[o][F]{a}="1"; 
(0,0)*++[o][F]{b}="2";
 (15,0)*++[o][F]{c}="5";
{\ar@/_1.2pc/_{} "1";"2"};
{\ar@/^1.2pc/^{} "1";"2"};
{\ar@{->}^{} "2";"5"};
\endxy$ and $\cP :G_0\to \Man$ is a phase space function, 
then 
\[
\left(\scI (w_a, w_b, w_c)\right) (x,y,z) =
(w_a (x), w_b (x,x, y), w_c (y,z)) 
\]
for all $(w_a,w_b, w_c) \in \S (G,\cP)$ and all $(x,y,z)\in \cP
(a)\times \cP (b) \times \cP (c))$.
\end{example}

\section{Modularity} \label{sec:modularity}
\subsection{Symmetry groupoid of a network}
In this section we give one possible version of what it means for some
of the open subsystems in the tuple of the constituent subsystems
$(w_a)_{a\in G_0} \in \S(G,\cP)$ on a network $(G,\cP)$
to be ``the same.''  We start with a pair of examples.
\begin{example}\label{ex:3.1.1}
Consider the graph
\[
G= \quad
\xy
(-10,0)*++[o][F]{1}="1"; 
(10,0)*++[o][F]{2}="2"; 
(30,0)*++[o][F]{3}="3"; 
{\ar@/_1.2pc/ "1";"2"};
{\ar@{->}_{} "2";"3"};
{\ar@/_1.2pc/ "2";"1"};
\endxy
\]
from subsection~\ref{sec:example}.  Choose a phase space function
$\cP:G_0 =\{1,2,3\} \to \Man$ with $\cP (1) = \cP (2) = \cP(3) =M$ for
some manifold $M$.  Then a typical tuple of open subsystems in
$\S(G,\cP)$ that defines the dynamics on the network is a triple of
the form $(f_1: M\times M\to TM, f_2: M\times M\to TM, f_3: M\times
M\to TM)$.  It make sense to require that $f_1 = f_2 =f_3$.  We can do it because the input trees $I(1)$, $I(2)$, $I(3)$ and the corresponding networks
$(I(i), \cP\circ \xi_i)$, $1\leq i\leq 3$, are all isomorphic (here,
as before $\xi_i: I(i)\to G$ are the canonical maps, q.v.\
Remark~\ref{remark:xi}). 
\end{example}

\begin{example}\label{ex:3.1.2}
Consider the graph
\[
G= \quad 
\xy
(-17,5)*++[o][F]{1}="1"; 
(-17,-5)*++[o][F]{2}="2"; 
(0,0)*++[o][F]{3}="3"; 
(20,0)*++[o][F]{4}="4";
{\ar@{->} "1";"3"};
{\ar@{->}^{} "2";"3"};
{\ar@{->}@/_1pc/^{} "3";"4"};
{\ar@/^1pc/^{} "3";"4"};
\endxy
\]
Again define a phase function $\cP$ by setting $\cP (i)$ to be the
same manifold $M$ for all $i$. An element of $\S(G,\cP)$ is then of
the form
\[
(f_1:M\to TM, f_2: M\to TM, f_3: M\times M \times M \to TM,
f_4:M\times M \times M \to TM).
\]
Now it does not make sense to require that $f_3 = f_1$ but it does
make sense to require that $f_1 = f_2$ and $f_3 = f_4$ (!).  Note that
in this example the networks $(I(1),\cP\circ \xi_1)$ and $(I(2),
\cP\circ \xi_2)$ are isomorphic as are the networks $(I(3),\cP\circ
\xi_3)$ and $(I(4), \cP\circ \xi_4)$.

If we were to set $\cP(1) = \cP (2) = \cP (3) = M$ and $\cP (4) = N\not = M$, then an element of   $\S(G,\cP)$ would be  of
the form
\[
(f_1:M\to TM, f_2: M\to TM, f_3: M\times M \times M \to TM,
f_4:M\times M \times N \to TN).
\] 
In this case setting $f_1 = f_2$ would make sense but setting $f_3 =
f_4$ would not. And while $(I(1),\cP\circ \xi_1)$ and $(I(2), \cP\circ
\xi_2)$ would still be isomorphic, the networks $(I(3),\cP\circ
\xi_3)$ and $(I(4), \cP\circ \xi_4)$ would not.
\end{example}

\begin{remark}\label{rmrk:3.1.3}
In Example~\ref{ex:3.1.1} there are $3^2$ isomorphisms 
\[
\varphi_{ij}: (I(j), \cP\circ \xi_j)\to (I(i), \cP\circ \xi_i), \quad
1\leq i,j\leq 3
\]
with
\[
\varphi_{ij}\circ \varphi_{jk} = \varphi_{ik}
\]
for all $i,j,k$, 
and with
\[
\varphi_{ii} = \textsf{id}
\]
for all $i$ (and consequently $\varphi_{ji} = \varphi_{ij}\inv$).
These 9 maps are an example of a groupoid.  
\end{remark}
We recall the shortest
definition of a groupoid: 
\begin{definition}
A {\sf groupoid} is a category with every
  morphism an isomorphism.  
\end{definition}

\begin{remark}
  One may think of a groupoid $\HH$ as a directed graph $\{\HH_1\toto
  H_0\}$ together with an associative multiplication  of pairs of edges with
  matched source and target:
\[
(a\xleftarrow{\alpha} b\xleftarrow{\beta} c) \mapsto
(a\xleftarrow{\alpha \beta }b)
\]
an inversion map
\[
 (a\xleftarrow{\alpha} b) \mapsto (a \xrightarrow{\alpha\inv } b)
 \]
 and a unit edge  $id_a: a\to a$ for every vertex $a$ of $\HH$.  We
 refer to the elements of $\HH_0$ as the {\em objects} of the groupoid
 $\HH$ and to the elements of $\HH_1$ as {\em isomorphisms} of $\HH$.
\end{remark}

\begin{example}
  In Remark~\ref{rmrk:3.1.3} the groupoid $\GG$ associated to the
  network $(G,\cP)$ has three objects, namely the networks $(I(i),
  \cP\circ \xi_i)$, $1\leq i \leq 3$, and 9 isomorphisms
  $\varphi_{ij}$, $1\leq i,j \leq 3$.  For the corresponding graph see
  \eqref{eq:3.1.1} below.
\end{example} 
  \begin{definition}[Symmetry groupoid $\GG (G,\cP)$ of a network] 
    The {\sf symmetry groupoid} $\GG = \GG
    (G,\cP)$ of a network $(G,\cP)$ is a category with the following
    sets of objects and isomorphisms, respectively. The set of objects
    $\GG_0$ of $\GG$ is the set of input networks 
\[
\{(I(a), \cP \circ \xi_a)\}_{a\in G_0}.
\]
The set of isomorphism $\GG_1$ of $\GG$ is
    the set of all possible isomorphisms of the input networks.
\end{definition}

\begin{example}\label{ex:3.1.19}
  Consider the network of Example~\ref{ex:3.1.1}.  As we have already
  pointed out the symmetry groupoid $\GG$ of this network has 3
  objects and 9 isomorphisms. It can be picture as follows:
\begin{equation} \label{eq:3.1.1}
\GG = \quad
\xy
(-8,-8)*+{I(1)}="1"; 
(10,10)*+{I(2)}="2"; 
(28,-8)*+{I(3)}="3"; 
{\ar@/_.5pc/ "1";"2"};
{\ar@/_.50pc/ "2";"1"};
{\ar@/_.5pc/ "1";"3"};
{\ar@/_.5pc/ "3";"1"};
{\ar@/_.5pc/ "3";"2"};
{\ar@/_.5pc/ "2";"3"};
(34,-8)*{\xy
(3,3)*{}="A";
(3,-3)*{}="D";
"A"; "D" **\crv{(8,8) & (15,0)& (8,-8)}?(.99)*\dir{>}; 
\endxy};
(-14,-8)*{
\xy
(-3,3)*{}="A";
(-3,-3)*{}="D";
"A"; "D" **\crv{(-8,8) & (-15,0)& (-8,-8)}?(.99)*\dir{>}; 
\endxy
};
(10,15)*{
\xy
(-3.5,0)*{}="A";
(3.5, 0)*{}="D";
"A"; "D" **\crv{(-9,7) & (0,10)& (9,7)}?(.99)*\dir{>};
\endxy}
\endxy
\end{equation}
\end{example}

\subsection{Groupoid-invariant vector fields}
Given a network $(G, \cP)$ with a groupoid symmetry we should be able
to talk about invariant elements of the vector space $\S(G,\cP)$, the
vector space of constituent open subsystems.  This is indeed the case.
There are several ways of making sense of invariants.  The most
concrete cuts out the subspace of invariant by appropriate equations.
To set this up we need
a number of  short technical lemmas.  We formulate them in a generality
that is not needed immediately but will be useful later.  The point of
the lemmas is to prove that for a given a network $(G,\cP)$ there is a
natural action of its symmetry groupoid $\GG (G,\cP)$ on the vector
bundle $\Control(G,\cP)\to G_0$.

We start by spelling out what we mean by the action of $\GG (G,\cP)$
on $\Control(G, \cP)$.   
\begin{notation}
Denote the category of real vector spaces and linear maps by $\Vect$.
\end{notation} 
\begin{definition}\label{def:3.2.2}
  An action of the groupoid $\GG(G,\cP)$ on the vector bundle $\Control
  (G,\cP)$
is  a functor
\[
\rho: \GG(G,\cP) \to \Vect,
\]
so that
\[
\rho (I(a),\cP\circ \xi_a) = \Ctrl (\PP I(a) \to \PP a)
\]
for all nodes $a$ of the graph $G$. Here as above $\Vect$ denotes the
category of vector spaces and linear maps.
\end{definition}

\begin{remark}\label{rmrk:3.2.3}
The definition amounts to the following:
\begin{enumerate}
\item For any two vertices $a,b\in G_0$ and an isomorphism $\varphi:
  (I(a),\cP\circ \xi_a)\to (I(b),\cP\circ \xi_b)$ in the groupoid
  $\GG$, there is an isomorphism
\[
\rho (\varphi) : \Ctrl (\PP I(a)\to \PP a) \to \Ctrl (\PP I(b)\to \PP b);
\]
\item If $(I(a),\cP\circ \xi_a) \xrightarrow{\varphi} (I(b),\cP\circ
  \xi_b)\xrightarrow{\psi} (I(c),\cP\circ \xi_c)$ is a pair of
  isomorphism in $\GG$ then
\[
\rho (\psi \circ \varphi) = \rho (\psi) \circ \rho (\varphi);
\]
\item If $\varphi:I(a)\to I(a)$ is the identity isomorphism then
  $\rho(\varphi)$ is the identity linear map.
\end{enumerate}

The notion of an action of a groupoid on a vector bundle is fairly
old.  See, for example, \cite{Westman}.  In the case where the vector
bundle in question is a collection of vector spaces parameterized by a
finite set of objects of the groupoid, as is the case for
$\Control(G,\cP)$, it reduces to Definition~\ref{def:3.2.2} above.
\end{remark}

\begin{lemma}\label{lemma:3.1.11}
Suppose $\psi: (G, \cP) \to (G',\cP')$ is an isomorphism of networks. Then 
$\PP \psi :\PP G' \to \PP G$ is a diffeomorphism.
\end{lemma}
\begin{proof}
Let $\psi\inv$ denote the inverse of $\psi$.  Then
  $\psi\circ \psi\inv = \mathsf{id}_{ (G, \cP)}$ and $\psi\inv \circ
  \psi = \mathsf{id}_{ (G', \cP')}$.  Hence 
\[
\mathsf{id}_{\PP G} = \PP \mathsf{id}_{ (G, \cP)} = \PP (\psi\circ \psi\inv)
= \PP (\psi\inv) \circ \PP \psi. 
\]
By the same argument
\[
\mathsf{id}_{\PP G'}= \PP \psi \circ \PP (\psi\inv).
\]
Hence $\PP \psi$ is invertible with the inverse given by $\PP(
\psi\inv)$.
\end{proof}

\begin{remark} Here is a one line category-theoretic proof of
  Lemma~\ref{lemma:3.1.11}: since $\PP$ is a functor, it takes
  isomorphisms to isomorphisms.
\end{remark}

\begin{lemma}\label{lemma:3.1.10}
  Suppose $(G, \cP)$, $(G',\cP')$ are two networks, $\xi_a:I(a) \to G$
  the input tree of a vertex $a$ of $G$, $\xi_{a'}:I(a') \to G'$ the
  input tree of a vertex $a'$ of $G'$ and $\varphi: I(a)\to I(a')$ an
  isomorphism of trees with $\cP'\circ \xi_{a'}\circ \varphi = \cP
  \circ \xi_a$.  Then the linear map
\begin{equation}
\Ctrl(\varphi): \Ctrl (\PP I(a) \to \PP a) \to \Ctrl (\PP
I(a')\to \PP a')
\end{equation}
defined by
\begin{equation}\label{eq:3.1.4}
\Ctrl(\varphi): (F:\PP I(a) \to T\PP a)\mapsto 
D (\PP \varphi|_{\{a\}})\inv \circ F \circ \PP\varphi
\end{equation}
is an isomorphism.  Here $\varphi|_{\{a\}}: \{a\}\to \{a'\}$ is the
restriction of $\varphi$ to the subgraph $\{a\}$ of $G$ (by
Remark~\ref{rmrk:2.3.6} $\varphi$ has to send $a$ to $a'$); it is an
isomorphism.
\end{lemma}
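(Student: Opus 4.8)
The plan is to check three things in turn: that $\Ctrl(\varphi)$ is well defined (i.e.\ that $\Ctrl(\varphi)(F)$ genuinely is a control system for the submersion $\PP j_{a'}$), that it is linear, and that it is invertible. I expect the only real content to be in the first step; linearity and invertibility are then formal consequences of the fact that $\PP\varphi$ and $\PP(\varphi|_{\{a\}})$ are diffeomorphisms.

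First I would record the relevant diagram of graph maps. Writing $j_a\colon \{a\}\to I(a)$ and $j_{a'}\colon \{a'\}\to I(a')$ for the inclusions of the roots, the square
\[
\varphi\circ j_a \;=\; j_{a'}\circ (\varphi|_{\{a\}})
\]
commutes, since both sides send the unique node $a$ to the root $a'=\varphi(a)$ of $I(a')$ (using Remark~\ref{rmrk:2.3.6}). Applying the contravariant functor $\PP$ (Remark~\ref{rmrk:2.1.15}) gives
\[
\PP j_a \circ \PP\varphi \;=\; \PP(\varphi|_{\{a\}})\circ \PP j_{a'}\colon \PP I(a')\to \PP a .
\]
Moreover $\varphi$ and $\varphi|_{\{a\}}$ are isomorphisms of networks, so by Lemma~\ref{lemma:3.1.11} both $\PP\varphi\colon \PP I(a')\to\PP I(a)$ and $\PP(\varphi|_{\{a\}})$ are diffeomorphisms; in particular $D(\PP(\varphi|_{\{a\}}))\inv$ is a well-defined fibrewise-linear bundle isomorphism $T\PP a\to T\PP a'$ covering the diffeomorphism $(\PP(\varphi|_{\{a\}}))\inv$.

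With this in hand, well-definedness is a fibre-tracking computation. Put $p=\PP j_a$ and $p'=\PP j_{a'}$, so that $F\in\Ctrl(\PP I(a)\to\PP a)$ means $F(q)\in T_{p(q)}\PP a$ for every $q$. Fix $q'\in\PP I(a')$. Then $F(\PP\varphi(q'))\in T_{p(\PP\varphi(q'))}\PP a$, and since $D(\PP(\varphi|_{\{a\}}))\inv$ covers $(\PP(\varphi|_{\{a\}}))\inv$, the vector $(\Ctrl(\varphi)(F))(q')$ sits in the fibre over $(\PP(\varphi|_{\{a\}}))\inv\big(p(\PP\varphi(q'))\big)$. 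The displayed identity $p\circ\PP\varphi=\PP(\varphi|_{\{a\}})\circ p'$ rewrites this base point as $(\PP(\varphi|_{\{a\}}))\inv\big(\PP(\varphi|_{\{a\}})(p'(q'))\big)=p'(q')$. Hence $(\Ctrl(\varphi)(F))(q')\in T_{p'(q')}\PP a'$, so $\Ctrl(\varphi)(F)$ is a bona fide element of $\Ctrl(\PP I(a')\to\PP a')$, and it is smooth as a composite of smooth maps.

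Linearity is then immediate, because the vector-space structure on each $\Ctrl$-space is pointwise, precomposition with $\PP\varphi$ respects it, and postcomposition with the fibrewise-linear map $D(\PP(\varphi|_{\{a\}}))\inv$ respects it. For invertibility I would exhibit the inverse as the operator attached to $\varphi\inv\colon I(a')\to I(a)$. Using $(\varphi\inv)|_{\{a'\}}=(\varphi|_{\{a\}})\inv$ together with $\PP(\varphi\inv)=(\PP\varphi)\inv$ and $\PP((\varphi|_{\{a\}})\inv)=(\PP(\varphi|_{\{a\}}))\inv$ (Lemma~\ref{lemma:3.1.11} and functoriality of $\PP$), one finds $\Ctrl(\varphi\inv)(H)=D(\PP(\varphi|_{\{a\}}))\circ H\circ(\PP\varphi)\inv$; substituting and cancelling the differentials via the chain rule and the diffeomorphisms against their inverses yields $\Ctrl(\varphi\inv)\circ\Ctrl(\varphi)=\mathsf{id}$ and $\Ctrl(\varphi)\circ\Ctrl(\varphi\inv)=\mathsf{id}$. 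Thus $\Ctrl(\varphi)$ is a linear isomorphism, and the only genuine obstacle was the fibre bookkeeping in the well-definedness step.
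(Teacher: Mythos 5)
Your proof is correct and follows essentially the same route as the paper: both rest on Lemma~\ref{lemma:3.1.11} to get that $\PP\varphi$ and $\PP(\varphi|_{\{a\}})$ are diffeomorphisms, and both exhibit the inverse explicitly as $F'\mapsto D\bigl(\PP(\varphi|_{\{a\}})\bigr)\circ F'\circ (\PP\varphi)\inv$, i.e.\ as $\Ctrl(\varphi\inv)$. The only difference is that you also carry out the well-definedness check (the fibre bookkeeping via the commuting square $\PP j_a\circ \PP\varphi = \PP(\varphi|_{\{a\}})\circ \PP j_{a'}$) and the linearity check, which the paper's proof leaves implicit; this is a sound and welcome elaboration, not a different argument.
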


\begin{proof}
  By assumption $\varphi: (I(a), \cP\circ \xi) \to (I(a'), \cP'\circ
  \xi')$ is an isomorphism of networks.  By Lemma~\ref{lemma:3.1.11},
  the maps $\PP \varphi$ and $\PP \varphi|_{\{a\}}$ are
  diffeomorphisms.  Therefore $\Ctrl(\varphi)$ has an inverse given
  by
\[
(F': \PP I(a') \to T\PP a')\mapsto D (\PP (\varphi|_{\{a\}})) \circ F
\circ \PP\varphi \inv.
\]
\end{proof}

It follows that we may define the functor $\rho:\GG (G, \cP) \to
\Vect$ on isomorphisms of the groupoid $\GG (G, \cP)$ by setting
\[
\rho (\varphi) : = \Ctrl (\varphi).
\]

\begin{lemma}\label{lemma:3.1.12}
  Given three networks $(G,\cP)$,$(G',\cP')$ and $(G'',\cP'')$, and a
  pair of isomorphism of input networks
\[
(I(a), \cP\circ \xi)\xrightarrow{\varphi}(I(a'), \cP'\circ
\xi'))\xrightarrow{\psi}(I(a''), \cP''\circ \xi'')
\]
we have
\begin{equation}
\Ctrl(\psi\circ \varphi) = \Ctrl{\psi}\circ \Ctrl{\varphi}.
\end{equation}
\end{lemma}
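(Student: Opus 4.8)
The plan is to verify the identity by unwinding the definition of $\Ctrl(-)$ from Lemma~\ref{lemma:3.1.10} on an arbitrary element, and then comparing the two resulting composites factor by factor. So I would fix a control system $F \colon \PP I(a) \to T\PP a$ in $\Ctrl(\PP I(a) \to \PP a)$ and compute both sides of the claimed equation applied to $F$. Applying the definition \eqref{eq:3.1.4} twice, the composite $\Ctrl(\psi)\circ\Ctrl(\varphi)$ sends $F$ to
\[
\Ctrl(\psi)\bigl(\Ctrl(\varphi)(F)\bigr) = D\bigl(\PP(\psi|_{\{a'\}})\bigr)\inv \circ D\bigl(\PP(\varphi|_{\{a\}})\bigr)\inv \circ F \circ \PP\varphi \circ \PP\psi,
\]
while the other side is $\Ctrl(\psi\circ\varphi)(F) = D\bigl(\PP((\psi\circ\varphi)|_{\{a\}})\bigr)\inv \circ F \circ \PP(\psi\circ\varphi)$. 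The whole proof then reduces to matching these two expressions.

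The matching uses three functorial facts, all established earlier. First, $\PP$ is a contravariant functor (Remark~\ref{rmrk:2.1.15}), so $\PP(\psi\circ\varphi) = \PP\varphi\circ\PP\psi$; this immediately identifies the factors to the right of $F$. Second, restriction to the root is compatible with composition: since $\varphi$ sends $a$ to $a'$ and $\psi$ sends $a'$ to $a''$ (Remark~\ref{rmrk:2.3.6}), we have $(\psi\circ\varphi)|_{\{a\}} = \psi|_{\{a'\}}\circ\varphi|_{\{a\}}$. Third, applying $\PP$ (contravariance again), then the differential $D$ (itself functorial, i.e.\ the chain rule $D(g\circ h) = Dg\circ Dh$), and finally inverting, gives $D(\PP((\psi\circ\varphi)|_{\{a\}}))\inv = D(\PP(\psi|_{\{a'\}}))\inv \circ D(\PP(\varphi|_{\{a\}}))\inv$, using that the inverse of a composite reverses the order of factors. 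This identifies the factors to the left of $F$, and the two expressions coincide. Since $F$ was arbitrary, the two linear maps agree.

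The only real care needed --- and hence the main (mild) obstacle --- is the bookkeeping of direction reversals, since $\PP$ is contravariant while $D$ is covariant. On the factors to the right of $F$, the single contravariance of $\PP$ produces the order $\PP\varphi\circ\PP\psi$, whereas on the factors to the left of $F$ the composite undergoes two order reversals (one from the contravariance of $\PP$, one from inverting the resulting composite, with $D$ preserving order), so that the net result $D(\PP(\psi|_{\{a'\}}))\inv \circ D(\PP(\varphi|_{\{a\}}))\inv$ is exactly what the outer application $\Ctrl(\psi)$ contributes first. Checking that these two opposite orderings are precisely the ones produced by $\Ctrl(\psi)\circ\Ctrl(\varphi)$ is the crux; everything else is formal. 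I expect no analytic difficulty, as each map involved is already known to be a diffeomorphism, or its differential an isomorphism, by Lemmas~\ref{lemma:3.1.11} and~\ref{lemma:3.1.10}, so all inverses exist and the composites are well defined.
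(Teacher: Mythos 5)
Your proof is correct and takes essentially the same route as the paper's: evaluate on an arbitrary $F\in\Ctrl(\PP I(a)\to \PP a)$, use the contravariance of $\PP$, the functoriality of $D$ (chain rule), and the order-reversal of inversion to match the factors on each side of $F$. The paper's own proof is exactly this computation, written as a single chain of equalities transforming $\Ctrl(\psi\circ\varphi)F$ into $\Ctrl(\psi)(\Ctrl(\varphi)F)$.
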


\begin{proof}
For $F\in \Ctrl(\PP I(a) \to \PP a)$ we have
\begin{eqnarray*}
  \Ctrl(\psi\circ \varphi)F&=&
  D (\PP (\psi\circ \varphi)|_{\{a\}}\inv) \circ F 
  \circ \PP (\psi\circ \varphi)\\
  &=& D ( (\PP \varphi|_{\{a\}}\circ \PP \psi|_{\{a'\}})\inv )\circ 
  F \circ \PP \varphi \circ \PP \psi \quad 
\textrm{ (since $\PP$ is a contravariant functor)}\\
  &=& D(\PP \psi|_{\{a'\}})\inv) \circ \left(D(\PP \varphi |_{\{a'\}})\inv) 
    \circ F \circ  \PP \varphi \right)\circ \PP \psi\\
  &=&\Ctrl(\psi) (\Ctrl (\varphi)F) .
\end{eqnarray*}
\end{proof}

We are now in the position to prove the main result of the section.
\begin{proposition}\label{prop:3.2.7}
The symmetry groupoid $\GG (G,\cP)$ of a network $(G,\cP)$ acts on the vector bundle $\Control(G,\cP)\to G_0$.  The action is given by 
\[
\left((I(a),\cP\circ \xi_a)\xrightarrow{\varphi} (I(b),\cP\circ
  \xi_b)\right)\mapsto \left(\Ctrl(\PP I(a) \to \PP
a)\xrightarrow{\Ctrl(\varphi)}\Ctrl(\PP I(b)\to \PP b)\right),
\]
where $\Ctrl(\varphi)$ is defined by \eqref{eq:3.1.4}.
\end{proposition}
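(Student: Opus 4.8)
The plan is to verify the three functoriality axioms from Remark~\ref{rmrk:3.2.3}, since these together are precisely what it means for $\rho(\varphi) := \Ctrl(\varphi)$ to define a functor $\rho : \GG(G,\cP) \to \Vect$, hence an action of the groupoid on $\Control(G,\cP)$. The construction $\Ctrl(\varphi)$ of \eqref{eq:3.1.4} has already been shown in Lemma~\ref{lemma:3.1.10} to be a well-defined linear \emph{isomorphism} from $\Ctrl(\PP I(a)\to \PP a)$ to $\Ctrl(\PP I(b)\to \PP b)$ whenever $\varphi$ is an isomorphism of input networks. So axiom (1) of Remark~\ref{rmrk:3.2.3} is already in hand.

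First I would observe that the required normalization $\rho(I(a),\cP\circ\xi_a) = \Ctrl(\PP I(a)\to \PP a)$ on objects holds by definition, since that is exactly how the objects of $\GG$ are assigned to vector spaces. Next, axiom (2)---compatibility with composition, $\rho(\psi\circ\varphi) = \rho(\psi)\circ\rho(\varphi)$---is exactly the content of Lemma~\ref{lemma:3.1.12}, which establishes $\Ctrl(\psi\circ\varphi) = \Ctrl(\psi)\circ\Ctrl(\varphi)$ using the contravariance of $\PP$ (Remark~\ref{rmrk:2.1.15}) and the chain rule for the differential $D$. So this axiom needs no new argument beyond citing the lemma.

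The only remaining point is axiom (3): that $\Ctrl(\mathsf{id}_{I(a)})$ is the identity linear map on $\Ctrl(\PP I(a)\to \PP a)$. Here I would simply unwind the definition \eqref{eq:3.1.4} with $\varphi = \mathsf{id}_{I(a)}$. Then $\PP\varphi = \mathsf{id}_{\PP I(a)}$ and $\varphi|_{\{a\}} = \mathsf{id}_{\{a\}}$, so $\PP(\varphi|_{\{a\}}) = \mathsf{id}_{\PP a}$ and its differential is the identity on $T\PP a$; thus $\Ctrl(\mathsf{id})(F) = \mathsf{id}\circ F\circ \mathsf{id} = F$ for every $F$. This is a routine one-line verification.

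Since none of the three axioms presents a genuine obstacle---each is either definitional or already proven as a preceding lemma---the proof is essentially an assembly. The cleanest route is to phrase it as a single observation: the assignment $\varphi \mapsto \Ctrl(\varphi)$ on isomorphisms, together with the object assignment, satisfies precisely the functoriality conditions of Definition~\ref{def:3.2.2} (as reformulated in Remark~\ref{rmrk:3.2.3}), with the three conditions supplied respectively by Lemma~\ref{lemma:3.1.10}, Lemma~\ref{lemma:3.1.12}, and the trivial computation for the identity. The modest ``hard part,'' if any, is purely bookkeeping: making sure the object-level normalization and the identity axiom are stated explicitly rather than silently assumed, so that the reader sees that all of Definition~\ref{def:3.2.2} is accounted for and not merely the composition law.
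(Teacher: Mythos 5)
Your proposal is correct and matches the paper's own proof essentially line for line: the paper likewise verifies the three conditions of Remark~\ref{rmrk:3.2.3}, citing Lemma~\ref{lemma:3.1.10} for the first, Lemma~\ref{lemma:3.1.12} for the second, and noting that the identity axiom holds by construction. Your only addition --- explicitly unwinding \eqref{eq:3.1.4} for the identity isomorphism --- is a slightly more careful rendering of a step the paper dispatches in one sentence.
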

\begin{proof} We need to check that the three conditions listed in
  Remark~\ref{rmrk:3.2.3} hold for $\rho(\varphi) = \Ctrl(\varphi)$.
The first one holds by Lemma~\ref{lemma:3.1.10}.  The second by Lemma~\ref{lemma:3.1.12}.
Note finally that by construction if $\varphi:I(a)\to I(a)$ is the
identity isomorphism then $\Ctrl (\varphi)$ is the identity linear
map.  We conclude that the functor
\[
\rho = \Ctrl: \GG(G,\cP) \to \Vect
\]
defines an action of the groupoid $\GG (G,\cP)$ on the vector bundle
$\Control(G, \cP)$.  
\end{proof}

Our next step is to define the space of invariant sections of the
vector bundle $\Control (G,\cP)\to G_0$ for this action, which is, by
definition, the space of invariant virtual vector fields on the
network.

\begin{definition}[Invariant virtual vector fields on a network]\label{def:VG}

Let $(G,\cP)$ be a network
  We  define the space $\V (G,\cP)$ of {\em
    groupoid-invariant  virtual vector fields} on the network to be 
\begin{eqnarray}\label{eq:defofVG}
 & \V G \equiv  \V(G,\cP) := \mbox{\hspace{13cm}} \\ 
&\{ (w_a)\in \S (G,\cP) \mid 
  \Ctrl(\sigma)w_a = w_b \textrm{ for all }  
\sigma\in \mathbb{G}(G,\cP) \mbox{ with } \sigma\colon I(a)\to I(b)\}. \nonumber
\end{eqnarray}
\end{definition}

\begin{example}
Consider the network of Example~\ref{ex:3.1.1}.  It is easy to see that
\[
\V (G,\cP) = \{(f_1,f_2, f_3) \in \S( G,\cP)\mid f_1 = f_2
= f_3\},
\]
where, as before $f_i: M\times M\to TM$ are control systems.  Note
that in this case the space of invariant virtual vector fields is
naturally isomorphic to the space $\Ctrl (\PP I(1) \to \PP 1) =
  \Ctrl(M\times M\to TM)$.  Note also that $\Ctrl(M\times M\to
  TM)$ is the space of invariant virtual vector field for the network
  $(G',\cP)$ where 
\[
G'=\quad \xymatrix{
  *+[o][F]{} \ar@(dr,ur)} 
\]
is the graph with one vertex and one edge and the function $\cP$
assigns the manifold $M$ to the one vertex of $G'$.
\end{example}

\begin{remark}
  The reader may wonder in what sense the sections in $\V (G,\cP)$ are
  ``invariant.''
  There are several ways to answer this question.  We start with the
  most concrete.  Note that the space $W^H$ of $H$-invariant vectors
  for a representation $\rho: H\to GL(W)$ of a {\em group} $H$
  satisfies
\begin{equation}\label{eq:3.2.5}
W^H =\{ w\in W\mid \rho (\sigma)w=w \textrm{ for all } \sigma \in H\}.
\end{equation}
It is easy to see now that \eqref{eq:defofVG} is an analogue of
\eqref{eq:3.2.5} for groupoids.  

More abstractly, we note that the
space $W^H$ is the limit of the functor $\rho:\underline{H} \to
\Vect$.  Here $\underline{H}$ denotes the category with one object $*$
and the set of morphisms $\Hom (*,*) = H$.  Similarly it is not hard
to see that $\V (G,\cP)$ as defined above by equation
\eqref{eq:defofVG} together with the evident projections $\V(G,\cP) \to \Ctrl
(\PP I(a) \to \PP a)$ is the limit of the functor $\Ctrl: \GG
(G,\cP)\to \Vect$.
\end{remark}

\begin{remark}
  We would like to think of the image $\scI(\V G)$ of the space $\V G$ of
  invariant virtual vector fields under the interconnection map $\scI:
  \S (G,\cP)\to \Gamma T\PP (G,\cP)$ as the space of
  ``groupoid-invariant vector fields'' on $\PP (G,\cP)$.  Note that
  this is {\em not} literally correct since there is no natural action
  of the groupoid $\GG (G,\cP)$ either on the the tangent bundle $T\PP
  (G,\cP)$ or on the space of its sections.
\end{remark}

\begin{remark}\label{rmrk:3.2.12}
  As we observed in Remark~\ref{rmrk:2.3.5} the graph underlying the
  input tree network $(I(a), \cP\circ \xi_a)$ of a network $(G,\cP)$
  is a directed tree of height 1.  If $\varphi: T_1\to T_2$ is an
  isomorphism of trees of height 1, then $\varphi$ necessarily sends
  the root $\rt T_1$ of the first tree to the root $\rt T_2$ of the
  second tree.  Hence if $\varphi: (T_1,\cP_1)\to (T_2,\cP_2)$ is an
  isomorphism of networks and $T_1, T_2$ are trees of height 1, it
  makes sense to define
\[
\Ctrl(\varphi) : \Ctrl(\PP T_1\to \PP \rt T_1) \to \Ctrl(\PP T_2\to
\PP \rt T_2)
\]
by a slight modification of \eqref{eq:3.1.4}:
\begin{equation}
\Ctrl (\varphi)F:= D\PP \left(\varphi|_{\rt T_1}\right)\inv \circ
F \circ \PP \varphi. 
\end{equation}
The proof of Proposition~\ref{prop:3.2.7} is then easy to modify to
show that that $\Ctrl$ is a well-defined functor from the groupoid of
height 1 tree networks and their isomorphism to the category $\Vect$
of (not necessarily finite dimensional) real vector spaces and linear
maps.
\end{remark}

\subsection{An alternative notion of modularity}

Throughout the paper we take the point of view that a network is a
directed graph $G$ together with an assignment of a phase space to
each vertex of $G$, that is, a pair 
\[
(G,\cP:G_0\to \textrm{collection
  of phase spaces}).
\]
  Golubitsky, Stewart and their collaborators in
their work on coupled cell networks additionally attach colors to
edges of graphs.  They require that maps of networks preserve the
colors.  In particular edges of input trees acquire colors from their
canonical maps into the defining graphs, and symmetry groupoids
consist of color preserving isomorphisms.  Thus from the point of view
of Golubitsky {\em et al.} we work with monochromatic graphs.  The
results of this paper do have their colored analogues.  The proofs,
{\em mutatis mutandis} are the same.  See \cite{deville2010dynamics}.

\section{Fibrations and invariant virtual vector fields}

We proved in Proposition~\ref{prop:phase-space-maps} that a map of
networks $\varphi:(G,\cP)\to (G',\cP')$ defines a smooth map $\PP
\varphi: \PP G'\to \PP G$ between their total phase spaces (going in
the opposite direction).  The map $\varphi$, in general, does not
induce a map between spaces of vector fields on the phase spaces $\PP
G$ and $\PP G'$. Nor does it induce a map between the spaces of
virtual vector fields $\S (G,\cP)$ and $\S (G',\cP')$, let alone the
spaces of groupoid-invariant virtual vector fields $\V G $ and $\V
G'$.  There is, however, a natural class of maps of networks that
does.  Following Boldi and Vigna~\cite{Vigna1} we call them {\em
  fibrations.} The notion of a graph fibrations is old.  It arouse
independently at different times in different areas of mathematics
under different names.  See \cite{VignaWP} for a discussion.

The goal of this section is to prove that a fibration of networks
$\varphi: (G,\cP)\to (G',\cP')$ naturally defines a linear map
$\varphi^*: \V (G',\cP')\to \V(G,\cP)$.  In the following sections we
show that the maps $\varphi^*$ and $\PP\varphi$ and the
interconnection maps of the two networks are compatible in the best
possible way. Consequently fibrations of networks give rise to maps of
dynamical systems.

\subsection{Fibrations}

\begin{definition}\label{def:fibration}
 A map $\varphi: G\to  G'$ of
  directed graphs is a {\sf fibration} if for any vertex $a$ of $ G$
  and any edge $e'$ of $ G'$ ending at $\varphi(a)$ there is a
  unique edge $e$ of $ G$ ending at $a$ with $\varphi (e) = e'$.

  A map of networks $\varphi: (G,\cP)\to (G',\cP')$ is a {\sf 
    fibration} if the corresponding map of graphs $\varphi:G\to G'$ is
  a fibration.
\end{definition}
\begin{example}\label{example:5}
The map of graphs
\[
\varphi: \xy
(-15,5)*++[o][F]{a_1}="1";
(-15,-5)*++[o][F]{a_2}="1'"; 
(0,0)*++[o][F]{b}="2"; 
{\ar@{->}^{\gamma} "1";"2"};
{\ar@{->}_{\delta} "1'";"2"};
\endxy
\quad \longrightarrow \quad
\xy
(-15,0)*++[o][F]{a}="1"; 
(0,0)*+[o][F]{b}="2";
 (15,0)*++[o][F]{c}="5";
{\ar@/_1.2pc/_{\delta'} "1";"2"};
{\ar@/^1.2pc/^{\gamma'} "1";"2"};
{\ar@{->}^{} "2";"5"};
\endxy
\]
sending the edge $\gamma$ to $\gamma'$ and the edge $\delta$ to
$\delta'$ is a graph fibration.
\end{example}
\begin{example}
  All the maps of graphs in \eqref{eq:Gdiag} are graph fibrations.  If
  we define the phase spaces functions on the three graphs by
  assigning to every node the same manifold $M$ then the corresponding
  maps of networks are  fibrations.
\end{example}
\begin{empt}\label{empt:4.2}
  Given any maps $\varphi:G\to G'$ of graphs and a node $a$ of $G$
  there is an induced map of input trees
\[
\varphi_a:I(a) \to I(\varphi(a)).
\]
On edges of $I(a)$ the map  is defined by 
\[
\varphi (a, \gamma):= (\varphi (a), \varphi (\gamma))
\]
(cf.\ Definition~\ref{def:input-tree}).
Moreover the diagram of graphs
\[
\xy
(-12, 8)*+{I(a)} ="1"; 
(12, 8)*+{I(\varphi(a))} ="2"; 
(-12, -10)*+{G}="3";
(12, -10)*+{G'}="4";
{\ar@{->}^{ \varphi_a} "1";"2"};
{\ar@{->}_{ \xi_a} "1";"3"};
{\ar@{->}^{ \xi_{\varphi(a)}} "2";"4"};
{\ar@{->}^{\varphi} "3";"4"};
\endxy
\]
commutes (the map $\xi_a: I(a)\to G$ from an input tree to the
original graph is defined in Remark~\ref{remark:xi}).
\end{empt}

\begin{lemma}\label{lemma:4.3}
If $\varphi:G\to G'$ is a graph fibration then the induced maps
\[
\varphi_a:I(a)\to I(\varphi(a))
\]
of input trees defined above are isomorphisms for all nodes $a$ of $G$.
\end{lemma}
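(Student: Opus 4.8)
The plan is to prove that $\varphi_a$ is a bijection on vertices and on edges; since $\varphi_a$ already respects source and target (as recorded in \ref{empt:4.2}), this will make it an isomorphism of graphs. The structural observation driving the argument is that input trees are combinatorially trivial: by Remark~\ref{rmrk:2.3.5} the graph $I(a)$ is a directed tree of height $1$, so its edge set is in canonical bijection with its set of leaves (the non-root vertices) via $\gamma \mapsto (a,\gamma)$, and the same holds for $I(\varphi(a))$. Consequently $\varphi_a$ is an isomorphism exactly when its restriction to the leaves is a bijection, the root $a$ being sent to the root $\varphi(a)$ by construction.

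First I would write out the relevant maps explicitly. On vertices $\varphi_a$ sends the root $a$ to $\varphi(a)$ and a leaf $\gamma \in t\inv(a)$ to $\varphi(\gamma)$; since $t(\varphi(\gamma)) = \varphi(t(\gamma)) = \varphi(a)$, the image $\varphi(\gamma)$ is indeed a leaf of $I(\varphi(a))$, i.e.\ $\varphi(\gamma) \in t\inv(\varphi(a))$. Thus the restriction of $\varphi_a$ to leaves is precisely the map
\[
t\inv(a) \to t\inv(\varphi(a)), \qquad \gamma \mapsto \varphi(\gamma),
\]
and under the leaf--edge correspondence the edge map $(a,\gamma)\mapsto(\varphi(a),\varphi(\gamma))$ has the same behaviour. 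The key step is then to recognise that the fibration hypothesis is \emph{literally} the assertion that this fibre map is a bijection: applying Definition~\ref{def:fibration} to the vertex $a$ says that for every edge $e'\in t\inv(\varphi(a))$ of $G'$ there is a unique edge $\gamma\in t\inv(a)$ of $G$ with $\varphi(\gamma)=e'$. The existence clause yields surjectivity of $\gamma\mapsto\varphi(\gamma)$ onto $t\inv(\varphi(a))$, and the uniqueness clause yields injectivity. Hence the leaf map, and therefore the edge map, is a bijection, and combined with the bijection on roots this shows $\varphi_a$ is an isomorphism for every node $a$.

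I do not expect a genuine obstacle: the entire content is the identification of Definition~\ref{def:fibration} with bijectivity of the fibre map $t\inv(a)\to t\inv(\varphi(a))$. The one point deserving care is the reduction of ``isomorphism of input trees'' to ``bijection of leaf sets,'' which relies on the height-$1$ tree structure (Remark~\ref{rmrk:2.3.5}) together with the fact that an isomorphism of such trees must carry root to root (Remark~\ref{rmrk:2.3.6}); here that is automatic because $\varphi_a(a)=\varphi(a)$ is by construction the root of $I(\varphi(a))$.
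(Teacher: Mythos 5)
Your proposal is correct and follows essentially the same route as the paper's proof: both arguments observe that the fibration condition at the vertex $a$ (existence and uniqueness of lifts of edges ending at $\varphi(a)$) is exactly bijectivity of the induced map on edges/leaves, with the root going to the root by construction. The paper states this more tersely, while you make the leaf--edge correspondence and the role of Remarks~\ref{rmrk:2.3.5} and \ref{rmrk:2.3.6} explicit, but the mathematical content is identical.
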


\begin{proof}
  Given an edge $(\varphi(a), \gamma')$ of $I(\varphi(a))$ there is a
  unique edge $\gamma$ of $G$ with $\varphi(\gamma) = \gamma'$ and
  $t(\gamma) = a$. Consequently $\varphi_a (a,\gamma) =
  (\varphi(a),\gamma')$.  It follows that $\varphi_a$ is bijective on
  vertices and edges.
\end{proof}

\begin{corollary}\label{cor:4.1.6}
  If a map of networks $\varphi: (G,\cP)\to (G',\cP')$ is a 
  fibration then 
\[
\varphi_a: (I(a),\cP\circ \xi_a)\to (I(\varphi(a)),
  \cP'\circ \xi_{\varphi(a)})
\]
 is an isomorphism of networks.
\end{corollary}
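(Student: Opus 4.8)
The plan is to combine the purely graph-theoretic statement of Lemma~\ref{lemma:4.3} with the compatibility of phase space functions, both of which are nearly immediate given the setup in~\ref{empt:4.2}. First I would observe that Lemma~\ref{lemma:4.3} already provides the hard part: $\varphi_a\colon I(a)\to I(\varphi(a))$ is an isomorphism of the underlying directed graphs, so its inverse $\varphi_a\inv$ exists as a map of graphs. What remains is to upgrade this to an isomorphism of \emph{networks}, which by the definition of a map of networks means verifying that the phase space functions are intertwined, i.e.\ that
\[
(\cP'\circ \xi_{\varphi(a)}) \circ \varphi_a = \cP\circ \xi_a
\]
as functions on the vertices of $I(a)$.

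Next I would establish this identity by chaining two facts. From the commuting square in~\ref{empt:4.2} we have $\xi_{\varphi(a)}\circ \varphi_a = \varphi\circ \xi_a$ as maps of graphs, hence in particular as functions on vertices. Composing with $\cP'$ on the left and using associativity gives
\[
\cP'\circ \xi_{\varphi(a)} \circ \varphi_a = \cP'\circ \varphi \circ \xi_a,
\]
and since $\varphi\colon (G,\cP)\to (G',\cP')$ is a map of networks we have $\cP'\circ\varphi = \cP$, so the right-hand side equals $\cP\circ \xi_a$, as required. This shows $\varphi_a$ is a map of networks. Applying $\varphi_a\inv$ to both sides of the displayed intertwining identity yields the analogous compatibility $\cP'\circ \xi_{\varphi(a)} = (\cP\circ \xi_a)\circ \varphi_a\inv$, so $\varphi_a\inv$ is also a map of networks, and therefore $\varphi_a$ is an isomorphism of networks.

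There is essentially no real obstacle to overcome here; the corollary is a clean bookkeeping consequence of Lemma~\ref{lemma:4.3} together with the commuting diagram of~\ref{empt:4.2}. The only point deserving a moment of care is that an \emph{isomorphism of networks} requires both that $\varphi_a$ respect the phase space functions and that its graph-theoretic inverse do so as well; but the latter is automatic once the former is in hand, since the intertwining equation can simply be solved for $\varphi_a\inv$. I would also note in passing that a one-line functorial phrasing is available, mirroring the remark after Lemma~\ref{lemma:3.1.11}: $\varphi_a$ is a map of networks that is invertible as a map of graphs, and inverting the defining relation $\cP'\circ\varphi=\cP$ pulled back along $\xi$ gives the inverse network map for free.
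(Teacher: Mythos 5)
Your proof is correct and follows essentially the same route as the paper, which simply states that the corollary ``follows immediately from Lemma~\ref{lemma:4.3} and the definition of an isomorphism of networks.'' You have merely made explicit the bookkeeping the paper leaves implicit: the phase-space compatibility $(\cP'\circ\xi_{\varphi(a)})\circ\varphi_a = \cP\circ\xi_a$ via the commuting square of~\ref{empt:4.2} and $\cP'\circ\varphi=\cP$, and the observation that the graph-theoretic inverse automatically respects the phase space functions.
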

\begin{proof}
  Follows immediately from Lemma~\ref{lemma:4.3} above and the
  definition of an isomorphism of networks.
\end{proof}

\subsection{Maps between spaces of invariant virtual vector fields}
The goal of this subsection is to show that fibrations of networks send
groupoid-invariant virtual vector fields to groupoid-invariant virtual
vector fields.  Namely we prove:
\begin{proposition} \label{prop:4.2.1}
A fibration
\[
\varphi: (G,\cP)\to (G',\cP')
\]
of networks  defines a linear map 
\[
\varphi^*: \S (G',\cP')\to \S(G,\cP)
\]
between spaces of sections of control bundles, that is, between spaces of
virtual vector fields on the networks in question.  

Moreover
$\varphi^*$ maps the space $\V (G',\cP')$ of groupoid-invariant
virtual vector fields to the space $\V(G,\cP)$.
\end{proposition}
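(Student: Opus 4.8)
The plan is to build $\varphi^*$ one node at a time, using the tree isomorphisms supplied by the fibration. By Corollary~\ref{cor:4.1.6}, for each node $a$ of $G$ the induced map
\[
\varphi_a: (I(a),\cP\circ \xi_a) \to (I(\varphi(a)), \cP'\circ \xi_{\varphi(a)})
\]
is an isomorphism of input networks, so Lemma~\ref{lemma:3.1.10} produces a linear isomorphism
\[
\Ctrl(\varphi_a): \Ctrl(\PP I(a)\to \PP a) \to \Ctrl(\PP I(\varphi(a))\to \PP \varphi(a)).
\]
Given a virtual vector field $(w'_{a'})_{a'\in G'_0}\in \S(G',\cP')$, I would define $\varphi^*$ componentwise by
\[
(\varphi^* w')_a := \Ctrl(\varphi_a)\inv \bigl( w'_{\varphi(a)}\bigr) \in \Ctrl(\PP I(a)\to \PP a).
\]
Each component is the composite of the linear projection $w'\mapsto w'_{\varphi(a)}$ with the linear map $\Ctrl(\varphi_a)\inv$, so $\varphi^*: \S(G',\cP')\to \S(G,\cP)$ is linear, which settles the first assertion.

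For the second assertion I would show that if $(w'_{a'})$ lies in $\V(G',\cP')$ then $w:=\varphi^* w'$ satisfies the defining invariance condition of $\V(G,\cP)$. So fix an isomorphism $\sigma: (I(a),\cP\circ \xi_a)\to (I(b),\cP\circ \xi_b)$ in the symmetry groupoid $\GG(G,\cP)$; I must check $\Ctrl(\sigma) w_a = w_b$. The central idea is a conjugation trick: transport $\sigma$ across the fibration to a map
\[
\tau := \varphi_b\circ \sigma \circ \varphi_a\inv : I(\varphi(a)) \to I(\varphi(b))
\]
of input trees of $G'$. Because $\varphi_a$, $\sigma$, and $\varphi_b$ are all isomorphisms of networks (Corollary~\ref{cor:4.1.6} and the definition of $\GG(G,\cP)$), their composite $\tau$ is an isomorphism of the input networks $(I(\varphi(a)),\cP'\circ \xi_{\varphi(a)})$ and $(I(\varphi(b)),\cP'\circ \xi_{\varphi(b)})$, hence an element of $\GG(G',\cP')$.

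Invariance of $w'$ then gives $\Ctrl(\tau)\, w'_{\varphi(a)} = w'_{\varphi(b)}$. Expanding $\tau$ and using the functoriality of $\Ctrl$ established in Lemma~\ref{lemma:3.1.12} (together with $\Ctrl(\varphi_a\inv)=\Ctrl(\varphi_a)\inv$), this reads
\[
\Ctrl(\varphi_b)\circ \Ctrl(\sigma)\circ \Ctrl(\varphi_a)\inv \bigl(w'_{\varphi(a)}\bigr) = w'_{\varphi(b)}.
\]
Since $w_a = \Ctrl(\varphi_a)\inv(w'_{\varphi(a)})$ and $w_b = \Ctrl(\varphi_b)\inv(w'_{\varphi(b)})$ by definition of $\varphi^*$, applying $\Ctrl(\varphi_b)\inv$ to both sides yields $\Ctrl(\sigma)\, w_a = w_b$, which is exactly the condition cutting out $\V(G,\cP)$. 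The main obstacle I anticipate is not any single computation but rather verifying carefully that $\tau$ genuinely lands in $\GG(G',\cP')$ --- that conjugating a symmetry of the source network by the fibration-induced tree isomorphisms produces a legitimate symmetry of the target network --- since this is precisely where the fibration hypothesis (through Corollary~\ref{cor:4.1.6}) does the real work; once that is in hand, the remainder is bookkeeping with the functor $\Ctrl$.
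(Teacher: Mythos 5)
Your proposal is correct and takes essentially the same route as the paper's own proof: the same componentwise definition $(\varphi^* w')_a = \Ctrl(\varphi_a)\inv\bigl(w'_{\varphi(a)}\bigr)$, the same conjugation $\varphi_b\circ\sigma\circ\varphi_a\inv$ landing in $\GG(G',\cP')$ via the fibration-induced isomorphisms of input networks, and the same appeal to functoriality of $\Ctrl$ to unwind the invariance of $w'$ into $\Ctrl(\sigma)w_a = w_b$. There is nothing to add.
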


\begin{proof}
  Recall (q.v.\ Definition~\ref{def:S}) that $\S (G,\cP) = \bigsqcap
  _{a\in G_0} \Ctrl( \PP I(a)\to \PP a)$.  We define
\[
\varphi^*:\bigsqcap _{a'\in G'_0} \Ctrl( \PP I(a')\to \PP a') \to
\bigsqcap _{a\in G_0} \Ctrl( \PP I(a)\to \PP a)
\]
by 
\[
(\varphi^*w')_a := \Ctrl(\varphi_a)\inv (w'_{\varphi(a)})
\]
for all $a\in G_0$.  Evidently $\varphi^* $ is linear.  

We now argue that invariant sections get mapped to invariant sections.
Consider $w'\in \V (G',\cP')$.  Let $\sigma:(I(a),\cP\circ \xi_a)\to
(I(b),\cP\circ \xi_b)$ be an isomorphism in the groupoid $\GG
(G,\cP)$. Since $\varphi$ is a fibration, the maps
\[
\varphi_a: (I(a),\cP\circ \xi_a) \to (I(\varphi(a)),\cP'\circ \xi_{\varphi(a)})
\]
and
\[
\varphi_b: (I(b),\cP\circ \xi_b) \to (I(\varphi(b)),\cP'\circ \xi_{\varphi(b)})
\]
are isomorphisms.  Therefore
\[ \varphi_b \circ \sigma \circ \varphi_a\inv:
(I(\varphi(a)),\cP'\circ \xi_{\varphi(a)}) \to
(I(\varphi(b)),\cP'\circ \xi_{\varphi(b)})
\] is an isomorphism of networks, hence an isomorphism in the groupoid
$\GG (G',\cP')$.  Since $w'$ is $\GG (G',\cP')$ invariant by
assumption, we have
\[ \Ctrl(\varphi_b \circ \sigma\circ \varphi_a\inv ) w'_{\varphi(a)} =
w'_{\varphi(b)}.
\] Since $\Ctrl$ is a functor on networks of height 1 trees (cf.\
Remark~\ref{rmrk:3.2.12}, it respects compositions and takes inverses
to inverses.  Consequently
\[ \Ctrl(\varphi_b) \circ \Ctrl(\sigma)\circ \Ctrl(\varphi_a)\inv
w'_{\varphi(a)} = w'_{\varphi(b)}.
\]
Thus
\[
\Ctrl(\sigma) (\varphi^* w')_a = \Ctrl(\sigma) \circ \Ctrl(\varphi_a)\inv
w'_{\varphi(a)} = \Ctrl(\varphi_b)\inv
w'_{\varphi(b)} = (\varphi^*w')_b,
\]
which proves that $\varphi^*w'\in \V (G,\cP)$.
\end{proof}

\begin{remark}
  The proof above shows that a fibration $\varphi: (G,\cP)\to
  (G',\cP')$ also induces a fully-faithful map of groupoids
\[
\GG(\varphi): \GG (G,\cP)\to \GG (G',\cP')
\]
which is given by 
\[
\left( (I(a),\cP\circ \xi_a)\xrightarrow{\sigma} (I(b),\cP\circ
  \xi_b)\right)\mapsto \left( (I(\varphi(a)),\cP\circ
  \xi_{\varphi(a)})\xrightarrow{\varphi_b \circ \sigma \circ \varphi_a\inv}
  (I(\varphi(b)),\cP\circ \xi_{\varphi(b)})\right).
\]
\end{remark}

\begin{remark} 
  Here is an alternative, more geometric, way to think of
  Proposition~\ref{prop:4.2.1} and its proof.  The collection of maps
\[
\left\{ \Ctrl(\varphi_a): \Ctrl(\PP I(a)\to \PP a)\to \Ctrl(\PP
    I(\varphi(a)) \to \PP\varphi(a)) \right\}
\]
define a map of vector bundles
\[
\tilde{\varphi}: \Control(G,\cP)\to \Control(G',\cP'),
\]
which restricts to an isomorphism on each fiber.  Hence $\Control
(G,\cP)\to G_0$ is the pullback of $\Control(G',\cP')\to G_0'$.  Consequently
sections of $\Control(G',\cP')\to G_0'$ pull back to sections of
$\Control(G,\cP)\to G_0$.  Moreover the vector bundle map
$\tilde{\varphi}$ intertwines the actions of the groupoids $\GG
(G,\cP)$ and $\GG (G'\cP'$. Hence invariant sections pull back to
invariant sections.
\end{remark}
\begin{remark} 
  In section~\ref{sec:6} below we show that somewhat surprisingly the
  map $\varphi^*:\V (G',\cP')\to \V(G,\cP) $ of
  Proposition~\ref{prop:4.2.1} is always surjective.  We also
  characterize the kernel of $\varphi^*$.  In particular if $\varphi:
  (G,\cP)\to (G',\cP')$ is a quotient map (in the setting of coupled
  cell networks the fibers of such $\varphi$ are equivalence classes
  of a balanced equivalence relation) then $\varphi^*:\V (G',\cP')\to
  \V(G,\cP) $ is an isomorphism.
\end{remark}

\subsection{Fibrations and maps of dynamical systems}

The goal of this subsection is to prove that fibrations of networks give rise
to maps between dynamical systems.  This  is arguably the main result of the
paper.
Here is a precise statement:

\begin{theorem}\label{thm:main}
  Let $\varphi: (G,\cP) \to (G',\cP')$ be a fibration of networks.
  Then for any groupoid-invariant virtual vector field $w'\in \V G'$ the map
  $\PP \varphi: \PP G'\to \PP G$ intertwines the vector fields $\scI'
  (w')$ and $\scI (\varphi^* w')$:
\begin{equation}\label{eq:main}
D (\PP \varphi) \circ \scI'  (w') = \scI (\varphi^* w') \circ \PP \varphi.
\end{equation}
Equivalently the diagram
\begin{equation}\label{eq:7}
\xy
 (-15, 10)*+{T\PP G'}="1";
 (15, 10)*+{T\PP G}="2";
 (-15, -5)*+{\PP G'}="3";
 (15, -5)*+{\PP G}="4";
{\ar@{->}^{D\PP\varphi } "1";"2"}; 
{\ar@{->}^{\scI'(w') } "3";"1"}; 
{\ar@{->}_{\scI (\varphi^*w') } "4";"2"};
{\ar@{->}_{\PP\varphi} "3";"4"};
\endxy
\end{equation}
commutes.
\end{theorem}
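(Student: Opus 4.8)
The plan is to prove \eqref{eq:main} componentwise. Since $\PP G=\bigsqcap_{a\in G_0}\cP(a)$, the tangent bundle $T\PP G$ is the categorical product of the $T\cP(a)$ via the projections $D(\PP\iota_a)\colon T\PP G\to T\PP a$; as in the proof of Proposition~\ref{prop:3.2}, a smooth map into $T\PP G$ is determined by its composites with these projections. Hence it suffices to fix a node $a\in G_0$, set $a'=\varphi(a)$, and check that $D(\PP\iota_a)\circ D(\PP\varphi)\circ\scI'(w')$ and $D(\PP\iota_a)\circ\scI(\varphi^*w')\circ\PP\varphi$ agree as maps $\PP G'\to T\PP a$.

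The computation runs on two commuting squares of graphs. The first is the square of \ref{empt:4.2}, $\varphi\circ\xi_a=\xi_{a'}\circ\varphi_a$; the second, $\varphi\circ\iota_a=\iota_{a'}\circ(\varphi_a|_{\{a\}})$, is obtained by restricting $\varphi_a$ to the root node. Applying the contravariant functor $\PP$ turns these into $\PP\xi_a\circ\PP\varphi=\PP\varphi_a\circ\PP\xi_{a'}$ and $\PP\iota_a\circ\PP\varphi=\PP(\varphi_a|_{\{a\}})\circ\PP\iota_{a'}$. For the left-hand side I would then write $D(\PP\iota_a)\circ D(\PP\varphi)=D(\PP\iota_a\circ\PP\varphi)=D\PP(\varphi_a|_{\{a\}})\circ D(\PP\iota_{a'})$, and invoke the defining property of the interconnection map of $(G',\cP')$ (Theorem~\ref{thm:3.8}, whose section into $\PP I(a')$ is $\PP\xi_{a'}$ by Proposition~\ref{prop:2.26}), giving $D(\PP\iota_{a'})\circ\scI'(w')=w'_{a'}\circ\PP\xi_{a'}$. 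Thus the left-hand component equals $D\PP(\varphi_a|_{\{a\}})\circ w'_{a'}\circ\PP\xi_{a'}$.

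For the right-hand side, Theorem~\ref{thm:3.8} applied to $(G,\cP)$ gives $D(\PP\iota_a)\circ\scI(\varphi^*w')=(\varphi^*w')_a\circ\PP\xi_a$. By the definition of the pullback in Proposition~\ref{prop:4.2.1} and the formula for $\Ctrl(\varphi_a)$ from Remark~\ref{rmrk:3.2.12},
\[
(\varphi^*w')_a=\Ctrl(\varphi_a)\inv(w'_{a'})=D\PP(\varphi_a|_{\{a\}})\circ w'_{a'}\circ(\PP\varphi_a)\inv .
\]
Precomposing with $\PP\xi_a\circ\PP\varphi=\PP\varphi_a\circ\PP\xi_{a'}$ and using that $\PP\varphi_a$ is a diffeomorphism (Corollary~\ref{cor:4.1.6} with Lemma~\ref{lemma:3.1.11}), so that $(\PP\varphi_a)\inv\circ\PP\varphi_a=\mathsf{id}$, the right-hand component collapses to $D\PP(\varphi_a|_{\{a\}})\circ w'_{a'}\circ\PP\xi_{a'}$ --- exactly the left-hand component. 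Notice that the factor $D\PP(\varphi_a|_{\{a\}})$ sits on both sides and cancels on its own, so I never have to evaluate it (though $\cP'\circ\varphi=\cP$ does force $\varphi_a|_{\{a\}}$ to induce the identity on phase spaces).

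The one delicate point --- and the likeliest place to err --- is the bookkeeping of variances: $\PP$ reverses arrows while $D$ preserves them, so each graph square must be transported with care, and one must confirm that the root restriction $\varphi_a|_{\{a\}}$ appearing in $\Ctrl(\varphi_a)$ is literally the map produced by restricting the second square. Once the two squares and the two interconnection identities are aligned, the argument is a short diagram chase whose only nontrivial move is the cancellation $(\PP\varphi_a)\inv\circ\PP\varphi_a=\mathsf{id}$; no estimates or smoothness checks beyond those already furnished by the earlier constructions are needed.
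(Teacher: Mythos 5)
Your proposal is correct and takes essentially the same route as the paper's own proof: both reduce \eqref{eq:main} to components via the universal property of $T\PP G$, and both rest on the input-tree square of \ref{empt:4.2}, the root-restriction square \eqref{eq:**}, the componentwise description of the interconnection maps, and the formula $(\varphi^*w')_a=\Ctrl(\varphi_a)\inv w'_{\varphi(a)}$. The only difference is presentational: the paper packages your explicit cancellation $(\PP\varphi_a)\inv\circ\PP\varphi_a=\mathsf{id}$ into the commutativity of diagram \eqref{eq:8}, whereas you carry it out by hand (and, in doing so, correctly read the component identity of Theorem~\ref{thm:3.8} as $w_a\circ\PP\xi_a$ via Proposition~\ref{prop:2.26}).
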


\begin{remark}
  Note that by Proposition~\ref{prop:4.2.1} since $w'$ is groupoid
  invariant virtual vector field on the network $(G',\cP')$, the
  pullback $\varphi^*w'$ is a {\em groupoid-invariant} virtual vector
  field on the network $(G, \cP)$, i.e., $\varphi^*w'\in \V (G,\cP)$.
\end{remark}

\begin{proof}[Proof of Theorem~\ref{thm:main}]
Recall that the manifold $\PP G$ is the product 
  $\bigsqcap_{a\in G_0} \PP a$.
  Hence the tangent bundle bundle $T\PP G$ is the product
  $\bigsqcap_{a\in G_0} T\PP a$. In particular for each node $a$ of the graph $G$, the canonical projection
\[
T\PP G\to T\PP a
\]
is the  differential of the map $\PP \iota_a: \PP G\to \PP a$. Here, as
before, $\iota_a: \{a\} \hookrightarrow G$ is the canonical inclusion
of graphs.  By the universal property of products, two maps into
$T\PP G$ are equal if and only if all their components are equal.
Therefore, in order to prove that \eqref{eq:7} commutes it is enough
to show that
\[
D \PP \iota_a \circ \scI (\varphi^*w') \circ \PP \varphi =
D \PP \iota_a \circ D \PP \varphi\circ \scI'(w')
\]
for all nodes $a\in G_0$.  By definition of the restriction
$\varphi|_{\{a\}}$ of $\varphi:G\to G'$ to $\{a\}\hookrightarrow G$,
the diagram
\begin{equation}\label{eq:**}
\xy
 (-10, 15)*+{\{a\}}="1";
 (10, 15)*+{\{\varphi(a)\}}="2";
 (-10, 0)*+{ G}="3";
 (10, 0)*+{ G'}="4";
{\ar@{->}^{\varphi|_{\{a\}} } "1";"2"}; 
{\ar@{->}_{\iota_a } "1";"3"}; 
{\ar@{->}_{\iota_{\varphi(a)} } "2";"4"};
{\ar@{->}_{\varphi} "3";"4"};  
\endxy
\end{equation}
commutes.  By the definition of the pullback map $\varphi^*$ and the
interconnection maps $\scI$, $\scI'$ the diagram
\begin{equation}\label{eq:8}
\xy
 (-15, 15)*+{T\PP a}="1";
 (15, 15)*+{T\PP\varphi(a)}="2";
 (-15, -0)*+{\PP I(a)}="3";
 (15, 0)*+{\PP I(\varphi(a)) }="4";
 (-15, -15)*+{\PP  G}="5";
(15, -15)*+{\PP  G'}="6";
{\ar@{->}^{D\PP\varphi|_{\{a\}} } "2";"1"}; 
{\ar@{->}_{(\varphi^*w')_a } "3";"1"}; 
{\ar@{->}^{w'_{\varphi(a)} } "4";"2"};
{\ar@{->}_{\PP\xi_a } "5";"3"}; 
{\ar@{->}^{\PP\xi_{\varphi(a)} } "6";"4"}; 
{\ar@{->}_{\PP\varphi_a} "4";"3"};
{\ar@{->}^{\PP\varphi} "6";"5"};
{\ar@/^{3pc}/^{\scI (\varphi^*w')_a} "5";"1"};
{\ar@/_{3pc}/_{\scI' (w')_{\varphi(a)}} "6";"2"};
\endxy
\end{equation}
commutes as well.  We now compute:
\begin{eqnarray*}
D\PP\iota_a \circ \scI (\varphi^*w')\circ \PP\varphi
  &= & (\scI (\varphi^*w'))_a \circ \PP\varphi 
\quad \textrm{ by definition of } \scI (\varphi^*w')_a \\
 &= & D \PP (\varphi|_{\{a\}})\circ \scI' (w')_{\varphi(a)}
  \quad \quad \quad \textrm{\quad by (\ref{eq:8})} \\
  & = & D \PP (\varphi|_{\{a\}})\circ D\PP \iota_{\varphi(a)} \circ \scI' (w')
 \quad \textrm{ by definition of } \scI' (w')_{\varphi(a)}\\
  &=& D \PP \left(\iota_{\varphi(a)} \circ \varphi|_{\{a\}}
\right) \circ \scI' (w')
  \quad \quad \textrm{ since $\PP$ is a contravariant functor }\\
  & = &  D \PP \left(\varphi \circ \iota_a 
\right) \circ \scI' (w')
 \quad  \quad \quad \quad  \quad\textrm{ by \eqref{eq:**} }\\
 &=& D \PP (\iota_a) \circ D\PP \varphi  
  \circ  \scI' (w').
\end{eqnarray*}
\end{proof}

\begin{remark}\label{rem:GS}
  In Lemma~\ref{lemma5.1.1} below we show that surjective fibrations
  of networks give rise to embeddings of dynamical systems.  Since
  balanced equivalence relations of the groupoid formalism of
  Golubitsky {\em et al.}~\cite{Golubitsky.Stewart.84,
    Golubitsky.Stewart.85, Golubitsky.Stewart.86,
    Golubitsky.Stewart.86.2, Golubitsky.Stewart.87,
    Golubitsky.Stewart.Schaeffer.book, Field.Golubitsky.Stewart.91,
    Golubitsky.Stewart.Dionne.94,
    Dellnitz.Golubitsky.Hohmann.Stewart.95,
    Dionne.Golubitsky.Silber.Stewart.95,
    Dionne.Golubitsky.Stewart.96.1, Dionne.Golubitsky.Stewart.96.2,
    Golubitsky.Stewart.Buono.Collins.98, Golubitsky.Stewart.98,
    Golubitsky.Stewart.00,
    Golubitsky.Knobloch.Stewart.00, Golubitsky.Stewart.02,
    Golubitsky.Stewart.02.2, Stewart.Golubitsky.Pivato.03,
    Golubitsky.Nicol.Stewart.04, Golubitsky.Pivato.Stewart.04,
    Golubitsky.Stewart.05, Golubitsky.Stewart.Torok.05,
    Golubitsky.Stewart.06, Golubitsky.Josic.Brown.06,
    Golubitsky.Shiau.Stewart.07} define quotient networks, each
  balanced equivalence relation give rise to a surjective maps of
  graphs and hence to surjective fibration of networks in our sense.
  Thus a special case of Theorem~\ref{thm:main} generalizes one
  direction of the groupoid formalism correspondence between invariant
  subspaces and balanced equivalence relations from ordinary
  differential equations to vector fields on manifolds. More
  specifically Theorem~\ref{thm:main} is a generalization, to
  manifolds, of Theorem 5.2 (direction (b))
  of~\cite{Golubitsky.Stewart.Torok.05} and of Theorem 9.2
  of~\cite{Stewart.Golubitsky.Pivato.03}.  We do not attempt to
  establish the converse.  More specifically, we do not attempt to characterize
  submanifolds of total phase spaces of networks that are preserved by
  all groupoid invariant  vector fields---we are only speaking to the ``forward'' direction.

\end{remark}
\section{Dynamical consequence of Theorem~\ref{thm:main}} 
\label{sec:surjective}

In this section, we will discuss the implications of
Theorem~\ref{thm:main}.  Consider a fibration $\varphi: (G,\cP)\to
(G',\cP')$ of networks.  Then $\varphi$ defines a map $\varphi_0:
G_0\to G_0'$ from the set of vertices of the graph $G$ to the set of
vertices of the graph $G'$.  In general $\varphi_0$ is neither
injective nor surjective.  However if a graph fibration $\varphi:G\to
G'$ is surjective on vertices, it is automatically surjective on
edges.  Similarly if a graph fibration $\varphi:G\to G'$ is injective
on vertices, then it is injective on edges as well.  From now on we
simply talk about injective and surjective graph fibrations.

Next observe that a given fibration $\varphi: (G,\cP)\to (G',\cP')$
can always be factored as a map onto its image followed by the
inclusion of the image:
\[
(G,\cP)\xrightarrow{\varphi} (\varphi(G),
\cP')\stackrel{i}{\hookrightarrow} (G', \cP')
\]
Hence any fibration can be factored as a surjection followed by an
injection.  We next analyze surjective and injective fibrations of
networks.

\subsection{Surjective fibrations}

\begin{lemma}\label{lemma5.1.1}
  Suppose $\varphi:(G,\cP)\to (G',\cP')$ is a surjective fibration.
  Then $\PP\varphi: \PP G'\to \PP G$ is an embedding whose image is a
  ``polydiagonal''
\[
\Delta_\varphi = \{x\in \P G\mid x_a = x_b\textrm{ whenever
}\varphi(a) = \varphi(b)\}.
\]
\end{lemma}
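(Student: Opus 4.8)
The plan is to establish the two claims in turn: that $\PP\varphi$ is an embedding, and that its image is precisely $\Delta_\varphi$. As noted in an earlier remark, surjectivity on vertices alone already forces $\PP\varphi$ to be an embedding, and a surjective fibration is in particular surjective on vertices; so the fibration hypothesis beyond surjectivity is not actually needed here (it enters only in the dynamical statement, Theorem~\ref{thm:main}). To keep the argument self-contained I would exhibit an explicit smooth retraction. Since $\varphi_0\colon G_0\to G'_0$ is a surjection of finite sets it admits a set-theoretic section $\sigma\colon G'_0\to G_0$, and the identity $\cP'\circ\varphi=\cP$ gives $\cP'(a')=\cP(\sigma(a'))$ for each $a'$. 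I would then define $r\colon \PP G\to \PP G'$ by $(r(x))_{a'}:=x_{\sigma(a')}$; this is smooth, being assembled from the coordinate projections $\pi_{\sigma(a')}$. Using the defining formula $(\PP\varphi(x'))_a=x'_{\varphi(a)}$ one checks that $r\circ\PP\varphi=\mathrm{id}_{\PP G'}$, so $\PP\varphi$ is a smooth injection admitting a globally defined smooth retraction, hence an embedding.

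For the image I would argue by double inclusion. The containment $\PP\varphi(\PP G')\subseteq\Delta_\varphi$ is immediate from $(\PP\varphi(x'))_a=x'_{\varphi(a)}$: whenever $\varphi(a)=\varphi(b)$, the $a$- and $b$-coordinates of $\PP\varphi(x')$ both equal $x'_{\varphi(a)}$ and hence agree. The reverse containment $\Delta_\varphi\subseteq\PP\varphi(\PP G')$ carries the real content. Given $x\in\Delta_\varphi$, I would build a preimage $x'$ by setting $x'_{a'}:=x_a$ for an arbitrarily chosen vertex $a$ with $\varphi(a)=a'$; such an $a$ exists by surjectivity on vertices. The step that needs care, and essentially the only nontrivial point in the lemma, is \emph{well-definedness}: if $\varphi(a)=\varphi(b)=a'$ then the very condition defining $\Delta_\varphi$ forces $x_a=x_b$, so $x'_{a'}$ does not depend on the chosen representative. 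One also checks that $x'_{a'}=x_a\in\cP(a)=\cP'(\varphi(a))=\cP'(a')$, so $x'$ is a genuine point of $\PP G'$, and then $(\PP\varphi(x'))_a=x'_{\varphi(a)}=x_a$ for every $a$, whence $\PP\varphi(x')=x$.

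Combining the two steps, $\PP\varphi$ is an embedding onto $\Delta_\varphi$, which exhibits the polydiagonal as an embedded submanifold of $\PP G$. The one thing to remain careful about throughout is the bookkeeping inherent in the categorical-product (unordered-tuple) description of the phase spaces, together with the identifications $\cP=\cP'\circ\varphi$; no deeper difficulty arises.
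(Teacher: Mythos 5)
Your proof is correct, but it takes a genuinely different route from the paper's. The paper's proof uses surjectivity to partition $G_0$ into the fibers $\varphi\inv(a')$, regroups the categorical product accordingly as $\PP G \cong \bigsqcap_{a'\in G'_0}\bigl(\bigsqcap_{a\in\varphi\inv(a')}\cP'(a')\bigr)$, and observes that under this identification $\PP\varphi$ is the product over $a'\in G'_0$ of the diagonal maps $x\mapsto(x,\ldots,x)$; the embedding property and the identification of the image with the polydiagonal are then read off factor by factor, the polydiagonal being visibly the product of the small diagonals (the proof even reduces first to the one-vertex case, where $\PP\varphi$ is literally the diagonal $M\to M^{G_0}$). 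You instead build a global smooth retraction $r\colon\PP G\to\PP G'$ from a set-theoretic section of the vertex surjection, deduce the embedding property from $r\circ\PP\varphi=\mathrm{id}_{\PP G'}$ (which indeed yields injectivity, immersivity, and homeomorphism onto the image at once), and identify the image by double inclusion, correctly isolating well-definedness of the candidate preimage as the one place where the polydiagonal condition is used. Both arguments use only surjectivity on vertices, so your opening observation that the fibration hypothesis is not needed here agrees with the paper's earlier remark. What the paper's decomposition buys is conceptual transparency: the lemma becomes a product of instances of the classical fact that a diagonal is an embedded submanifold. What your argument buys is a little extra information and self-containedness: you exhibit the polydiagonal as a smooth retract of $\PP G$ via an explicit left inverse, and you avoid appealing to the (easy but unstated) facts that diagonals are embedded and that a product of embeddings is an embedding.
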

\begin{proof}
  Assume first for simplicity that $G'$ has only one vertex $*$ and
  $\cP'(*) = M$.  Then for any vertex $a$ of $G$ we have
\[
\cP(a) = \cP' (\varphi(a)) = \cP'(*) = M,
\]
$\PP G' = M$ and $\PP G = M^{G_0}$, where as before $G_0$ is the set
of vertices of the graph $G$.  By
Proposition~\ref{prop:phase-space-maps} the map $\PP\varphi:M\to
M^{G_0}$ is of the form
\[
\PP \varphi (x) = (x,\ldots, x)
\]
for all $x\in M$.

In general 
\[
\PP \varphi: \PP G' =\bigsqcap_{a'\in G'_0} \cP' (a') \to \bigsqcap_{a'\in G'_0} \left( \bigsqcap_{a\in \varphi\inv (a')} \cP' (a')\right) = \PP G
\]
is the product of maps of the form
\[
\cP' (a') \to \bigsqcap_{a\in \varphi\inv (a')} \cP' (a'), \quad
x\mapsto (x,\ldots, x).
\]
\end{proof}

\begin{example}
We consider the following surjective fibration $\varphi:(G,\cP)\to (G',\cP')$ of networks.  We take $G$ to be  the graph
\begin{equation}\label{eq:xystring}
\xy
(-45,0)*++[o][F]{1}="1"; 
(-30,0)*+[F]{2}="2"; 
(-15,0)*++[o][F]{3}="3"; 
(0,0)*++{} ="4"; 
{\ar@{->}@/_1pc/ "2";"1"};
{\ar@/_1pc/ "1";"2"};
{\ar@{->}_{} "2";"3"};
{\ar@{->}_{} "3";"4"};
\endxy
\cdots
\xy
(-17,,0)*++{}="3"; 
(0,0)*++[F]{\scriptstyle{2n-2}}="4"; 
(20,0)*++[o][F]{\quad\scriptstyle{2n-1}\quad}="5"; 
(17,0)*++{}="5'"; 
(23,0)*++{}="5*";
(40,0)*++[F]{\scriptstyle{2n}}="6";
{\ar@{->}_{} "3";"4"};
{\ar@{->}_{} "4";"5'"};
{\ar@{->}_{} "5*";"6"};
\endxy
\end{equation}
with $2n$ vertices ($n\geq 2$). We choose a phase space function $\cP$
that assigns a manifold $M$ to all odd numbered vertices and a (different)
manifold $N$ to all even numbered vertices.  We take $G'$ to be the
graph
\begin{equation}\label{eq:cycle}
  \xy
(-45,0)*++[o][F]{a}="1"; 
(-30,0)*+[F]{b}="2"; 
{\ar@{->}@/_1pc/ "2";"1"};
{\ar@/_1pc/ "1";"2"};
\endxy
\end{equation}
with two vertices and two arrows.  We set $\cP'(a) =M$ and $\cP'(b) =
N$.  We define the surjective fibration $\varphi\colon G\to G'$ by
setting
\begin{equation*}
  \varphi(n) = \begin{cases} a, & n \textrm{ odd},\\ b,& n \textrm { even}
\end{cases}
\end{equation*}
The corresponding total phase space map $\PP \varphi: M\times N\to (M\times N)^n$
is given by the formula
\[
\PP (x,y) = (x,y,x,y,\ldots, x,y).
\]
The groupoid $\GG (G',\cP')$ is trivial. Consequently $\V
(G',\cP')$ consists of a pair of control systems $w'_a: M\times N \to
TM$ and $w'_b: N\times M \to TN$.  They interconnect to define a
vector field $\scI' (w'):M\times N \to TM\times TN$ with 
\[
\scI' (w') (x,y) = (w'_a (x,y), w'_b (y,x))
\] 
for all $(x,y) \in M\times N = \PP G'$.

The groupoid $\GG (G,\cP)$ is {\em not} trivial: all input networks
corresponding to odd numbered vertices are uniquely isomorphic.  That
is, given the vertices $2k+1$ and $2\ell +1$, $k\not = \ell$, there is
exactly one isomorphism $\psi_{k\ell}:I(2k+1)\to I(2\ell +1)$ as well
$\psi_{\ell k} = {\psi_{k\ell}}\inv: I(2\ell +1) \to I(2k+1)$. Analogous
statement holds for input networks corresponding to even numbered
vertices.  

The pullback map 
\[
\varphi^*: \V (G',\cP')\to \V (G,\cP)
\]
is easily seen to be given by
\[
\varphi^* (w'_a,w'_b) = (w'_a, w'_b,\ldots, w'_a, w'_b),
\]
and $\scI (\varphi^*w')\in \Gamma T(M\times N)^n$ is given by
\[
\scI (\varphi^*w') (x_1,y_1,\ldots, x_{n}, y_n) =
(w'_a (x_1,y_1), w'_b (y_1,x_1), \ldots, w'_a (x_n,y_n), w'_b (y_n,x_n)).
\]
It is clear that $\PP \varphi (M\times N)$ is an invariant submanifold
of the vector field $\scI (\varphi^*w')$, as should be expected in 
light of Theorem~\ref{thm:main}.
\end{example}

\subsection{Injective fibrations}
Consider an injective fibration $\varphi: (G,\cP)\to (G',\cP')$ of
networks.  Lemma~\ref{lem:5.2.1} below shows that the map $\PP
\varphi: \PP G'\to \PP G$ of total phase spaces is a surjective
submersion.  Combining this with Theorem~\ref{thm:main} we see that for any groupoid-invariant virtual vector field $w'\in \V (G',\cP')$ the map
\[
\PP \varphi: (\PP G', \scI' (w'))\to (\PP G, \scI (\varphi^*w'))
\]
is a projection of dynamical systems.  In particular for any singular
point $x$ of the vector field $\scI (\varphi^*w')$, i.e., the point
where the vector field is zero, the fiber $\PP \varphi\inv (x)$ is an
invariant submanifold of the vector field $\scI (w')$.

Note also that since the map of graphs $\varphi: G\to G'$ is injective,
$\varphi: G\to \varphi(G)$ is an isomorphism.  Since $\varphi$ is also
a graph fibration, there are no edges of $G'$ with the source in
$G'\smallsetminus \varphi (G)$ and target in the image $\varphi(G)$.
Thus the image of $\varphi$ is a subsystem of $G'$ that drives the
dynamical system on $G'$. In other words the notion of an injective
fibration makes precise the intuitive idea of a subsystem driving a
larger network.

\begin{lemma}\label{lem:5.2.1}
  Suppose $\varphi:(G,\cP)\to (G',\cP')$ is an injective fibration.
  Then $\PP\varphi: \PP G'\to \PP G$ is a surjective submersion.
\end{lemma}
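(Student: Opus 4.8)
The plan is to recognize that, under the injectivity hypothesis, the map $\PP\varphi$ is nothing more than a coordinate projection from a product of manifolds onto a subproduct, and that such projections are automatically surjective submersions. The only structural input I need is Proposition~\ref{prop:phase-space-maps}, which says that the component of $\PP\varphi$ at a node $a\in G_0$ is the canonical projection $\pi'_{\varphi(a)}\colon \PP G'\to \cP'(\varphi(a)) = \cP(a)$. Note that although the hypothesis is ``injective fibration,'' the map $\PP\varphi$ depends only on the vertex map, so only injectivity on vertices will actually be used.

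First I would record that since $\varphi$ is injective on vertices, the vertex map $\varphi_0\colon G_0\to G'_0$ identifies $G_0$ with the subset $\varphi(G_0)\subseteq G'_0$, and split the index set as $G'_0 = \varphi(G_0)\sqcup R_0$, where $R_0 := G'_0\smallsetminus \varphi(G_0)$. Using the defining condition $\cP'\circ\varphi = \cP$ of a map of networks, the categorical product defining $\PP G'$ then factors as $\PP G'\cong \PP G\times R$, where $R := \bigsqcap_{a'\in R_0}\cP'(a')$ and the first factor is $\bigsqcap_{a\in G_0}\cP'(\varphi(a)) = \bigsqcap_{a\in G_0}\cP(a) = \PP G$.

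Next I would verify that under this identification $\PP\varphi$ becomes the projection $\mathrm{pr}\colon \PP G\times R\to \PP G$ onto the first factor. By Proposition~\ref{prop:phase-space-maps} we have $\pi_a\circ\PP\varphi = \pi'_{\varphi(a)}$ for every $a\in G_0$, and by the very construction of the identification $\pi_a\circ\mathrm{pr} = \pi'_{\varphi(a)}$ as well; since these two maps into $\PP G$ have equal components for all $a\in G_0$, the universal property of the categorical product forces $\PP\varphi = \mathrm{pr}$. It then remains to observe that the projection of a finite product of manifolds onto one of its factors is a surjective submersion: surjectivity holds because $R$, a finite product of (nonempty) manifolds, is nonempty, so every $y\in \PP G$ is the image of $(y,r)$ for any chosen $r\in R$; and it is a submersion because in product charts it is a linear coordinate projection.

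I do not expect any genuine obstacle here, since the content is entirely bookkeeping about index sets; the only point requiring minor care is the degenerate case in which $\varphi$ is also surjective on vertices, so that $R_0=\emptyset$, the factor $R$ is a one-point manifold, and $\PP\varphi$ is in fact a diffeomorphism, which is still trivially a surjective submersion.
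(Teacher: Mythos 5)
Your proposal is correct and follows essentially the same route as the paper's own proof: partition $G'_0$ into $\varphi(G_0)$ and its complement, identify $\PP G'$ with $\PP G \times \bigsqcap_{a'\notin\varphi(G_0)}\cP'(a')$ using $\cP'\circ\varphi=\cP$, and observe that $\PP\varphi$ becomes the projection onto the first factor, hence a surjective submersion. Your write-up is somewhat more careful than the paper's (which simply asserts the identification of $\PP\varphi$ with the projection, where you verify it via the universal property of the categorical product, and which does not comment on nonemptiness or the surjective case), but these are refinements of detail, not a different argument.
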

\begin{proof}
  Since $\varphi:G\to G'$ is injective, the set of nodes $G_0'$ of
  $G'$ can be partitioned as the disjoint union of the image
  $\varphi(G_0)$, which is a copy of $G_0$, and the complement.  Hence
\[
\PP G' \simeq \bigsqcap_{a\in G_0} \cP(\varphi(a)) \times
\bigsqcap_{a'\not\in \varphi(G_0)} \cP' (a') \simeq \PP G \times
\bigsqcap_{a'\not\in \varphi(G_0)} \cP' (a').
\]
With respect to this identification of $\PP G'$ with $\PP G \times
\bigsqcap_{a'\not\in \varphi(G_0)} \cP' (a')$ the map $\PP\varphi:\PP
G'\to \PP G$ is the projection
\[
 \PP G \times
\bigsqcap_{a'\not\in \varphi(G_0)} \cP' (a') \to \PP G.
\]
which is a surjective submersion.
\end{proof}
\begin{example}
Consider the injective graph fibration
\begin{equation}\label{eq:ten}
\xy
(15,14)*+{G};
(80,20)*+{G'};
(13,0)*++{ 
\xy
(-6,0)*++[o][F]{1}="1"; 
(9,0)*++[o][F]{2}="2"; 
(24,0)*++[o][F]{3}="3"; 
{\ar@/_1.2pc/ "1";"2"};
{\ar@{->}_{} "2";"3"};
{\ar@/_1.2pc/ "2";"1"};
\endxy
}="2";
(80, 0)*++{
\xy
(-45,0)*++[o][F]{1}="1"; 
(-30,0)*++[o][F]{2}="2"; 
(-15,0)*++[o][F]{3}="3"; 
(-58,6)*++[o][F]{10}="10";
(-45,10)*++[o][F]{4}="4"; 
(-30,10)*++[o][F]{5}="5"; 
(-10,10)*++[o][F]{6}="6"; 
(-10,-10)*++[o][F]{7}="7";
(-45,-10)*++[o][F]{9}="9"; 
(-30,-10)*++[o][F]{8}="8";  
{\ar@{->}@/_1pc/ "2";"1"};
{\ar@/_1pc/ "1";"2"};
{\ar@{->} "2";"3"};
{\ar@{->}_{} "3";"6"};
{\ar@{->}_{} "3";"7"};
{\ar@{->}_{} "2";"5"};
{\ar@{->}_{} "2";"8"};
{\ar@{->}_{} "1";"4"};
{\ar@{->}_{} "1";"9"};
{\ar@{->}_{} "1";"10"};
\endxy
}="3";
  {\ar@{^{(}->}^{i} (40,0); (48, 0)}; 
\endxy
\end{equation}
Choose phase space functions
$\cP, \cP'$ so that $i: (G,\cP)\to (G',\cP')$ is a map of
networks. By the discussion above, for any choice of a groupoid
invariant virtual vector field $w'\in \V (G',\cP')$ the dynamics in
the subsystem $(\PP G, \scI(i^*w'))$ drives the entire system
$(\PP G', \scI (w'))$.  This is intuitively clear from the
graph~\eqref{eq:ten} since there are no ``feedbacks'' from vertices
$4, \ldots,10$ back into $1,2,3$.%
\end{example}

\subsection{General maps}

As we observed in the beginning of the section any fibration
$\varphi:(G,\cP)\to (G',\cP')$ can be factored as a surjection onto
its image 
\[
\varphi: (G,\cP)\to (\varphi(G),\cP')
\]
followed by the inclusion 
\[
i:
(\varphi(G),\cP')\hookrightarrow (G',\cP').
\]
It follows from the two subsections above that for any groupoid
invariant virtual vector field $w'\in \V (G',\cP')$ the map of
dynamical systems
\[
 \PP \varphi: (\PP G', \scI'(w')) \to (\PP G, \scI (\varphi^*w'))
\]
factors as a projection of dynamical systems
\[
\PP i:  (\PP G', \scI'(w')) \twoheadrightarrow (\PP\varphi(G), \scI' (i^*w'))
\]
followed by the embedding
\[
 (\PP\varphi(G), \scI' (i^*w')) \to (\PP G, \scI (\varphi^*w')).
\]
\begin{example}
Consider the graph fibration
\[ 
 \xy
(-5,15)*++{G};
(-15,5)*++[o][F]{a_1}="1";
(-15,-5)*++[o][F]{a_2}="1'"; 
(0,0)*++[o][F]{b}="2"; 
{\ar@{->}^{\gamma} "1";"2"};
{\ar@{->}_{\delta} "1'";"2"};
\endxy
\quad \stackrel{\varphi}{\longrightarrow} \quad
\xy
(-15,0)*++[o][F]{a}="1"; 
(0,0)*+[o][F]{b}="2";
 (15,0)*++[o][F]{c}="5";
(0,15)*+{G'};
{\ar@/_1.2pc/_{\delta'} "1";"2"};
{\ar@/^1.2pc/^{\gamma'} "1";"2"};
{\ar@{->}^{} "2";"5"};
\endxy
\] 
from Example~\ref{example:5}.  Choose a phase space function $\cP'$ on
$G'$ and define $\cP:G_0\to \Man$ by $\cP(a_1) = \cP(a_2) = \cP'(a)$,
$\cP(b) = \cP'(b)$.  Then $\varphi: (G,\cP)
\to (G',\cP')$ is a fibration of networks.  It factors as
\[
 \xy
(-15,5)*++[o][F]{a_1}="1";
(-15,-5)*++[o][F]{a_2}="1'"; 
(0,0)*++[o][F]{b}="2"; 
{\ar@{->}^{} "1";"2"};
{\ar@{->}_{} "1'";"2"};
\endxy
\quad \stackrel{\varphi}{\longrightarrow} \quad
\xy
(-15,0)*++[o][F]{a}="1"; 
(0,0)*+[o][F]{b}="2";
{\ar@/_1.2pc/_{} "1";"2"};
{\ar@/^1.2pc/^{} "1";"2"};
\endxy
\quad \stackrel{i}{\hookrightarrow} \quad
\xy
(-15,0)*++[o][F]{a}="1"; 
(0,0)*+[o][F]{b}="2";
 (15,0)*++[o][F]{c}="5";
{\ar@/_1.2pc/_{} "1";"2"};
{\ar@/^1.2pc/^{} "1";"2"};
{\ar@{->}^{} "2";"5"};
\endxy
\] 
\end{example}

\section{Spaces of invariant virtual vector fields}\label{sec:6}

The purpose of this section is to characterize further and more
precisely the space of groupoid-invariant virtual vector fields on a
network $(G,\cP)$ and to understand better the pullback maps
$\varphi^*:\V (G',\cP')\to (G,\cP)$ induced by fibrations of networks
$\varphi:(G,\cP)\to (G',\cP')$.

\subsection{The space $\V(G,\cP)$ as a product of  spaces of fixed vectors}
It will be useful to introduce a bit more notation.

\begin{notation}
  Given a network $(G,\cP)$ we have an evident bijection between the
  set $G_0$ of vertices of the graph $G$ and the set $\GG_0=\{(I(a),
  \cP\circ \xi_a)\}_{a\in G_0}$ of objects of the groupoid $\GG
(G,\cP)$.  It will be convenient to identify the two sets:
\[
\GG (G,\cP)_0 = G_0.
\]
\end{notation}

\begin{definition}[Automorphism group] For a vertex $a$ of a graph
  $G$, hence for an object of the symmetry groupoid $\GG (G,\cP)$ of a
  network $(G,\cP)$ we set
\[
\Aut (a):= \{ \psi: (I(a),\cP\circ \xi_a)\to (I(a),\cP\circ \xi_a) \mid \psi \textrm{ is an isomorphism of networks }\}
\]
Clearly $\Aut(a)$ is a group under composition.  We call it the {\sf
  automorphism group} of the vertex $a$.
\end{definition}

\begin{remark}\label{rmrk:6.1.6}
  Note that $\Aut(a)$ is the collection of isomorphisms of the
  groupoid $\GG(G,\cP)$ with source and target $a$.

  By construction $\Aut(a)$ acts on the vector space $\Ctrl(\PP
  I(a)\to \PP a)$, the fiber of the bundle $\Control(G,\cP)\to G_0$.
  We denote the space of fixed vectors by $\Ctrl(\PP I(a)\to \PP
  a)^{\Aut(a)}$
\end{remark}

\begin{remark}
  In general given an object $a$ of a groupoid $\HH =\{\HH_1\toto
  \HH_0\}$ we have a group $\Aut(a)$ consisting of isomorphism of
  $\HH$ with source and target $a$.
\end{remark}

\begin{definition}
  Given a groupoid $\HH$ we say that two objects $a$ and $b$ of $\HH$
  are {\sf isomorphic} if there is an isomorphism $\gamma$ of $\HH$
  with source $a$ and target $b$.
\end{definition}

\begin{remark}
  It follows easily from the definition of a groupoid that being
  isomorphic is an equivalence relation on the objects.  We denote the
  collection of isomorphism classes of objects of a groupoid $\HH$ by
  $\HH_0/\HH_1$ and denote the isomorphism class of an object $a$ by $[a]$.
\end{remark}

\begin{lemma}
  Let $(G,\cP)$ be a network.  The space $\V(G,\cP)$ of groupoid
  invariant virtual vector fields is isomorphic (as a vector space) to
  the product
\[
\bigsqcup_{[a]\in \GG_0/\GG_1} \Ctrl(\PP I(a)\to \PP
  a)^{\Aut(a)}.
\]
Here as in Remark~\ref{rmrk:6.1.6}\, $\Ctrl(\PP I(a)\to \PP a)^{\Aut(a)}$
is the space of vectors fixed by the action of $\Aut(a)$.
\end{lemma}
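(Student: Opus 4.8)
The plan is to show that the invariance condition defining $\V(G,\cP)$ decouples across isomorphism classes of objects of the groupoid $\GG(G,\cP)$, and that within a single class the invariance condition is equivalent to specifying a single $\Aut(a)$-fixed vector. Recall that by Definition~\ref{def:VG} an element of $\V(G,\cP)$ is a tuple $(w_a)_{a\in G_0}$ with $\Ctrl(\sigma)w_a = w_b$ for every isomorphism $\sigma\colon I(a)\to I(b)$ in the groupoid. The first step is to fix a set of representatives: for each isomorphism class $[a]\in\GG_0/\GG_1$ choose one representative object, which (after the identification $\GG_0 = G_0$) we may call $a$ itself. I would then define a map
\[
\Phi\colon \V(G,\cP) \to \bigsqcup_{[a]\in\GG_0/\GG_1} \Ctrl(\PP I(a)\to \PP a)^{\Aut(a)}
\]
by $\Phi((w_b)_{b\in G_0}) := (w_a)_{[a]}$, i.e.\ by reading off the component at each chosen representative. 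The first thing to check is that $\Phi$ lands in the claimed space: for a representative $a$ and any $\psi\in\Aut(a)$, the invariance condition applied to $\sigma=\psi\colon I(a)\to I(a)$ gives $\Ctrl(\psi)w_a = w_a$, so $w_a$ is indeed $\Aut(a)$-fixed. Linearity of $\Phi$ is immediate since each $\Ctrl(\sigma)$ is linear (Lemma~\ref{lemma:3.1.10}).

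Next I would construct the inverse. Given a tuple $(v_{[a]})_{[a]}$ with each $v_{[a]}\in\Ctrl(\PP I(a)\to\PP a)^{\Aut(a)}$, I want to produce an invariant global section. For an arbitrary vertex $b\in G_0$, let $a$ be the chosen representative of $[b]$ and choose \emph{some} isomorphism $\sigma_b\colon I(a)\to I(b)$ in $\GG(G,\cP)$ (such a $\sigma_b$ exists precisely because $b\sim a$), and set $w_b := \Ctrl(\sigma_b)v_{[a]}$. For the representative itself we take $\sigma_a = \mathsf{id}$, so $w_a = v_{[a]}$; this guarantees $\Phi$ composed with this construction is the identity. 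The crucial point is \emph{well-definedness}: $w_b$ must not depend on the choice of $\sigma_b$. If $\sigma_b'$ is another such isomorphism, then $\sigma_b^{-1}\circ\sigma_b'\in\Aut(a)$, and since $v_{[a]}$ is $\Aut(a)$-fixed and $\Ctrl$ is a functor (respecting composition and inverses, by Lemma~\ref{lemma:3.1.12} and Remark~\ref{rmrk:3.2.12}), we get $\Ctrl(\sigma_b')v_{[a]} = \Ctrl(\sigma_b)\Ctrl(\sigma_b^{-1}\sigma_b')v_{[a]} = \Ctrl(\sigma_b)v_{[a]}$. This is the step I expect to be the main obstacle, in the sense that it is where all the structure is really used: it relies simultaneously on $\Aut(a)$-invariance of $v_{[a]}$ and on the functoriality of $\Ctrl$ on the groupoid of height-one tree networks.

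It then remains to verify that the tuple $(w_b)_{b\in G_0}$ so constructed actually lies in $\V(G,\cP)$, i.e.\ satisfies the full invariance condition. Let $\tau\colon I(b)\to I(c)$ be any isomorphism in $\GG(G,\cP)$; then $b$ and $c$ lie in the same class $[a]$, and by functoriality
\[
\Ctrl(\tau)w_b = \Ctrl(\tau)\Ctrl(\sigma_b)v_{[a]} = \Ctrl(\tau\circ\sigma_b)v_{[a]}.
\]
Since $\tau\circ\sigma_b\colon I(a)\to I(c)$ is an admissible isomorphism for the vertex $c$, well-definedness of the construction (just proved) shows the right-hand side equals $w_c$, as required. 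Thus the construction is a genuine two-sided inverse to $\Phi$: one composite is the identity by the choice $\sigma_a=\mathsf{id}$ at representatives, and the other is the identity because any invariant tuple is determined by its values at representatives via $w_b = \Ctrl(\sigma_b)w_a$. Both $\Phi$ and its inverse are linear, so they furnish the claimed isomorphism of vector spaces. I would close by remarking that this is the concrete, componentwise manifestation of the general fact that the limit of a functor $\Ctrl\colon\GG\to\Vect$ out of a groupoid decomposes over connected components as a product of $\Aut$-fixed subspaces.
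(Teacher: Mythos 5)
Your proposal is correct and takes essentially the same approach as the paper: both prove the lemma via the restriction map $w\mapsto (w_{a_1},\ldots,w_{a_N})$ to a chosen set of representatives of the isomorphism classes, using automorphism-invariance to see that each component lands in $\Ctrl(\PP I(a)\to \PP a)^{\Aut(a)}$. The only difference is one of detail: the paper compresses the surjectivity/injectivity argument into ``it follows that the restriction map is an isomorphism,'' whereas you explicitly construct the inverse via $w_b:=\Ctrl(\sigma_b)v_{[a]}$ and verify its well-definedness and invariance, which is exactly the content the paper leaves implicit.
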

\begin{proof}
  Suppose $w\in \V(G,\cP)$ is an invariant section of
  $\Control(G,\cP)\to G_0$.  Then for any node $a$ of $G_0$ and any
  automorphism $\psi\in \Aut(a)$ we have
\[
\Ctrl(\psi)w_a = w_a.
\]
Hence $w_a\in \Ctrl(\PP I(a)\to \PP a)^{\Aut(a)}$.  

If $a$ and $b$ are two isomorphic objects in the groupoid $\GG
(G,\cP)$ by way of $\psi: (I(a),\cP\circ \xi_a)\to (I(b),\cP\circ
\xi_b)$ then 
\[
w_b = \Ctrl(\psi) w_b.
\]
It follows that if we pick representatives $a_1,\cdots, a_N\in G_0$ of
the equivalence classes in $\GG_0/\GG_1$ then the restriction map
\[
\V (G,\cP) \to \bigsqcup_{i=1} ^N\Ctrl(\PP I(a_i)\to \PP
  a_i)^{\Aut(a_i)}, \quad w\mapsto (w_{a_1},\ldots, w_{a_N}) 
\]
is an isomorphism of vector spaces.  This proves the lemma.
\end{proof}

\begin{example} \label{ex:6.1.8}
Consider the network $(G,\cP)$ where $G$ is the graph
\[
\xy
(-45,0)*++[o][F]{1}="1"; 
(-30,0)*++[o][F]{2}="2"; 
(-15,0)*++[o][F]{3}="3"; 
(-58,6)*++[o][F]{10}="10";
(-45,10)*++[o][F]{4}="4"; 
(-30,10)*++[o][F]{5}="5"; 
(-10,10)*++[o][F]{6}="6"; 
(-10,-10)*++[o][F]{7}="7";
(-45,-10)*++[o][F]{9}="9"; 
(-30,-10)*++[o][F]{8}="8";  
{\ar@{->}@/_1pc/ "2";"1"};
{\ar@/_1pc/ "1";"2"};
{\ar@{->} "2";"3"};
{\ar@{->}_{} "3";"6"};
{\ar@{->}_{} "3";"7"};
{\ar@{->}_{} "2";"5"};
{\ar@{->}_{} "2";"8"};
{\ar@{->}_{} "1";"4"};
{\ar@{->}_{} "1";"9"};
{\ar@{->}_{} "1";"10"};
\endxy
\]
 and $\cP$ assigns the same manifold $M$ to each vertex of $G$.
Then the input trees of $G$ are all  of the form
\[
\xy
(-45,0)*++[o][F]{a}="1"; 
(-30,0)*++[o][F]{b}="2"; 
{\ar@{->} "1";"2"};
\endxy
\]
and the corresponding input networks are all isomorphic.  Moreover
they have trivial automorphism groups. Consequently
\[
\V(G,\cP)\simeq \Ctrl(M\times M\to TM).
\]
This is quite small compared to the space of all vector fields on the
total phase space $\PP G \simeq M^{10}$.
\end{example}

\subsection{Maps between spaces of invariant virtual vector fields}
The goal of this subsection is to understand the pullback map
$\varphi^*: \V (G',\cP')\to \V (G,\cP)$ between groupoid-invariant
virtual vector fields induced by a fibration $\varphi:(G,\cP)\to
(G',\cP)$ of networks.  We will see that $\varphi^*$ is always
surjective.  To describe its kernel we need the following concept.

\begin{definition}
  Let $\varphi:(G,\cP)\to (G',\cP)$ be a fibration of networks.  The
  {\sf essential image} $\essim \varphi \subset G_0'$ of $\varphi$
  consists of all the vertices $a'\in G_0'$ so that there is an
  isomorphism
\[
\psi: (I(a'),\cP'\circ\xi_{a'})\to (I(\varphi(a)), \cP\circ \xi_{\varphi(a)})
\]
of input networks for some vertex $a$ of $G$.

We say that $\varphi$ is {\sf essentially surjective} if $\essim
\varphi = G_0'$.
\end{definition}

\begin{example}\label{ex:6.2.2}
 The map 
\[
\xy
(15,14)*+{G};
(80,20)*+{G'};
(13,0)*++{ 
\xy
(-6,0)*++[o][F]{1}="1"; 
(9,0)*++[o][F]{2}="2"; 
(24,0)*++[o][F]{3}="3"; 
{\ar@/_1.2pc/ "1";"2"};
{\ar@{->}_{} "2";"3"};
{\ar@/_1.2pc/ "2";"1"};
\endxy
}="2";
(80, 0)*++{
\xy
(-45,0)*++[o][F]{1}="1"; 
(-30,0)*++[o][F]{2}="2"; 
(-15,0)*++[o][F]{3}="3"; 
(-58,6)*++[o][F]{10}="10";
(-45,10)*++[o][F]{4}="4"; 
(-30,10)*++[o][F]{5}="5"; 
(-10,10)*++[o][F]{6}="6"; 
(-10,-10)*++[o][F]{7}="7";
(-45,-10)*++[o][F]{9}="9"; 
(-30,-10)*++[o][F]{8}="8";  
{\ar@{->}@/_1pc/ "2";"1"};
{\ar@/_1pc/ "1";"2"};
{\ar@{->} "2";"3"};
{\ar@{->}_{} "3";"6"};
{\ar@{->}_{} "3";"7"};
{\ar@{->}_{} "2";"5"};
{\ar@{->}_{} "2";"8"};
{\ar@{->}_{} "1";"4"};
{\ar@{->}_{} "1";"9"};
{\ar@{->}_{} "1";"10"};
\endxy
}="3";
  {\ar@{^{(}->}^{i} (40,0); (48, 0)}; 
\endxy
\]
of networks is not surjective.  But it is essentially surjective if
$\cP'(i) = \cP'(j)$ for all $1\leq i< j\leq 10$, i.e., if we assign
the same manifold to all vertices of the graphs.
\end{example}

\begin{theorem}
  Let $\varphi:(G,\cP)\to (G',\cP')$ be a fibration of networks.  Then
  $\varphi^*: \V (G',\cP')\to \V (G,\cP)$ is surjective.  The kernel
  of $\varphi^*$ is the space
\[
\ker \varphi^* = \{w'\in \V (G',\cP')\mid w'_{a'}=0 \textrm{ for all } a'\in \essim \varphi\},
\]
where $\essim \varphi$ is the essential image of $\varphi$ defined
above.
\end{theorem}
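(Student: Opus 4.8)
The plan is to reduce everything to the product decomposition established in the preceding lemma, which identifies $\V(G,\cP)$ with the product of the fixed-point spaces $\Ctrl(\PP I(a)\to\PP a)^{\Aut(a)}$ over the isomorphism classes $[a]\in\GG_0/\GG_1$, and likewise $\V(G',\cP')$ with the product of $\Ctrl(\PP I(a')\to\PP a')^{\Aut(a')}$ over $[a']\in\GG'_0/\GG'_1$; the goal is then to read off $\varphi^*$ in these coordinates. First I would record that, since $\varphi$ is a fibration, Corollary~\ref{cor:4.1.6} makes each induced map $\varphi_a\colon (I(a),\cP\circ\xi_a)\to (I(\varphi(a)),\cP'\circ\xi_{\varphi(a)})$ an isomorphism of input networks; in particular $I(a)\cong I(\varphi(a))$, so $[a]\mapsto[\varphi(a)]$ is a well-defined map $\GG_0/\GG_1\to\GG'_0/\GG'_1$. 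By the definition of the essential image its image is precisely the set of classes contained in $\essim\varphi$, and since $a'\in\essim\varphi$ together with $[a']=[b']$ forces $b'\in\essim\varphi$, the subset $\essim\varphi\subset G'_0$ is a union of isomorphism classes.

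The key technical step, which I expect to be the main obstacle, is to check that $\Ctrl(\varphi_a)$ carries $\Aut(a)$-fixed vectors to $\Aut(\varphi(a))$-fixed vectors, yielding an isomorphism $\Ctrl(\PP I(a)\to\PP a)^{\Aut(a)}\xrightarrow{\sim}\Ctrl(\PP I(\varphi(a))\to\PP\varphi(a))^{\Aut(\varphi(a))}$. Here I would first note that conjugation $\psi\mapsto\varphi_a\circ\psi\circ\varphi_a\inv$ is a group isomorphism $\Aut(a)\to\Aut(\varphi(a))$, well defined because $\varphi_a$ is an isomorphism of networks and invertible via conjugation by $\varphi_a\inv$. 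Then, using that $\Ctrl$ is a covariant functor on the groupoid of height-one tree networks (Remark~\ref{rmrk:3.2.12}, Lemma~\ref{lemma:3.1.12}), for any $\psi'=\varphi_a\circ\psi\circ\varphi_a\inv\in\Aut(\varphi(a))$ and any $\Aut(a)$-fixed $v$ one computes $\Ctrl(\psi')\Ctrl(\varphi_a)v=\Ctrl(\varphi_a\circ\psi)v=\Ctrl(\varphi_a)\Ctrl(\psi)v=\Ctrl(\varphi_a)v$, so $\Ctrl(\varphi_a)v$ is indeed $\Aut(\varphi(a))$-fixed; invertibility on fixed-point spaces comes from the same statement applied to $\Ctrl(\varphi_a\inv)$.

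It remains to observe that $[a]\mapsto[\varphi(a)]$ is injective: if $I(\varphi(a))\cong I(\varphi(b))$ then, composing with the isomorphisms $\varphi_a,\varphi_b$, also $I(a)\cong I(b)$, i.e.\ $[a]=[b]$. Hence I may fix representatives $a_1,\dots,a_N$ of $\GG_0/\GG_1$ and choose representatives of $\GG'_0/\GG'_1$ whose first $N$ entries are $\varphi(a_1),\dots,\varphi(a_N)$ (one per class of $\essim\varphi$), the remaining ones ranging over the classes disjoint from $\essim\varphi$. In these product coordinates the defining formula $(\varphi^*w')_{a_i}=\Ctrl(\varphi_{a_i})\inv(w'_{\varphi(a_i)})$ exhibits $\varphi^*$ as the projection onto the first $N$ factors followed by the fiberwise isomorphisms $\Ctrl(\varphi_{a_i})\inv$ of the previous paragraph.

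From this description both assertions are immediate. Given $w\in\V(G,\cP)$ with coordinates $w_{a_i}$, setting $w'_{\varphi(a_i)}:=\Ctrl(\varphi_{a_i})(w_{a_i})$ (which is $\Aut(\varphi(a_i))$-fixed) and $w'\equiv 0$ on every class outside $\essim\varphi$ defines an element of $\V(G',\cP')$ with $\varphi^*w'=w$, proving surjectivity. For the kernel, $\varphi^*w'=0$ forces each $\Ctrl(\varphi_{a_i})\inv(w'_{\varphi(a_i)})=0$, hence $w'_{\varphi(a_i)}=0$ for every $i$ since $\Ctrl(\varphi_{a_i})\inv$ is an isomorphism; by groupoid-invariance of $w'$ this is equivalent to $w'_{a'}=0$ for every $a'$ in the classes $[\varphi(a_i)]$, that is, for every $a'\in\essim\varphi$, which is exactly the asserted kernel.
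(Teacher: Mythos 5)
Your proof is correct, and it is worth noting that it is actually more complete than the paper's own argument. The paper's proof is a terse, direct computation that only treats the kernel: from $(\varphi^*w')_a = \Ctrl(\varphi_a)\inv w'_{\varphi(a)}$ and the fact that each $\Ctrl(\varphi_a)\inv$ is an isomorphism, one gets $\varphi^*w'=0$ iff $w'$ vanishes on $\varphi(G_0)$, and then invariance of $w'$ transfers vanishing on the image to vanishing on the essential image; surjectivity of $\varphi^*$ is asserted but never explicitly argued. You instead route everything through the product-decomposition lemma of the preceding subsection, giving a full coordinate description of $\varphi^*$ as a projection onto the factors indexed by classes meeting $\essim\varphi$ followed by fiberwise isomorphisms, and for this you supply precisely the auxiliary facts the paper leaves unstated: that conjugation by $\varphi_a$ gives an isomorphism $\Aut(a)\to\Aut(\varphi(a))$, that $\Ctrl(\varphi_a)$ therefore restricts to an isomorphism of fixed-point subspaces $\Ctrl(\PP I(a)\to \PP a)^{\Aut(a)}\to \Ctrl(\PP I(\varphi(a))\to \PP \varphi(a))^{\Aut(\varphi(a))}$, that $[a]\mapsto[\varphi(a)]$ is well defined and injective on isomorphism classes, and that $\essim\varphi$ is a union of such classes. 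The payoff of your approach is that surjectivity becomes a genuine proof rather than an implicit appeal: given $w\in\V(G,\cP)$ you construct an invariant preimage by transporting the coordinates $w_{a_i}$ forward via $\Ctrl(\varphi_{a_i})$ and extending by zero off $\essim\varphi$. The cost is length; the paper's kernel computation is shorter and avoids choosing representatives, but your version is the one that fully substantiates both claims of the theorem.
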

\begin{proof}
  For $w'\in \V (G',\cP')$ the pullback $\varphi^*w'$ is zero if and
  only if the component $(\varphi^*w')_a = 0$ for all $a\in G_0$.
  Since $(\varphi^*w')_a =\Ctrl(\varphi_a)\inv w_{\varphi(a)}$ (q.v.\
  proof of Proposition~\ref{prop:4.2.1}) and since
  $\Ctrl(\varphi_a)\inv$ is an isomorphism we conclude that
\[
\varphi^*w'= 0 \quad \Leftrightarrow \quad w'_{\varphi(a)} \textrm{ for all } a\in G_0.
\]
Finally note that an invariant section $w'\in \V(G',\cP')$ vanishes on
the image of $\varphi$ if and only if it vanishes on the essential
image of $\varphi$.
\end{proof}

\begin{corollary}
  If $\varphi:(G,\cP)\to (G',\cP')$ is an essentially surjective
  fibration of networks then $\varphi^*:\V(G',\cP')\to \V (G,\cP)$ is
  an isomorphism.  In particular $\varphi^*$ is an isomorphism if
  $\varphi$ is surjective.
\end{corollary}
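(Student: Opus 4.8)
The plan is to deduce this directly from the preceding theorem, which already supplies both the surjectivity of $\varphi^*$ and an explicit description of its kernel; essentially all that remains is to check injectivity. First I would record that, by that theorem,
\[
\ker \varphi^* = \{w'\in \V (G',\cP')\mid w'_{a'}=0 \textrm{ for all } a'\in \essim \varphi\}.
\]
Under the hypothesis of essential surjectivity we have $\essim \varphi = G_0'$ by definition, so the condition ``$w'_{a'}=0$ for all $a'\in \essim\varphi$'' becomes ``$w'_{a'}=0$ for all $a'\in G_0'$,'' i.e.\ $w'=0$. Hence $\ker\varphi^* = \{0\}$ and $\varphi^*$ is injective. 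Combined with the surjectivity already established in the theorem, this shows that $\varphi^*$ is an isomorphism of vector spaces, proving the first assertion.

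For the ``in particular'' clause I would argue that a fibration which is surjective on vertices is automatically essentially surjective, and then invoke the first part. The key observation is that the image $\varphi(G_0)$ is always contained in the essential image: for each $a\in G_0$ the identity map $\mathrm{id}\colon (I(\varphi(a)),\cP'\circ\xi_{\varphi(a)})\to (I(\varphi(a)),\cP'\circ\xi_{\varphi(a)})$ is an isomorphism of input networks of exactly the form required by the definition of $\essim\varphi$, so $\varphi(a)\in\essim\varphi$. Thus if $\varphi$ is surjective on vertices, then $G_0'=\varphi(G_0)\subseteq\essim\varphi\subseteq G_0'$, forcing $\essim\varphi=G_0'$; that is, $\varphi$ is essentially surjective, and the first part applies to give the conclusion.

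There is no real obstacle here: the substantive content---surjectivity of $\varphi^*$ and the computation of its kernel---is carried out in the theorem, and the corollary is a formal consequence. The only point requiring a moment's care is the containment $\varphi(G_0)\subseteq\essim\varphi$, which I would spell out explicitly because the definition of the essential image is phrased in terms of the \emph{existence} of an isomorphism of input networks rather than literal membership in the image. This containment is precisely what realizes ``surjective'' as a special case of ``essentially surjective,'' and hence what lets the second statement follow from the first.
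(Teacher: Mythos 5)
Your proof is correct and follows essentially the same route as the paper, which states this corollary without proof as an immediate formal consequence of the preceding theorem: essential surjectivity makes the kernel $\{w'\in \V(G',\cP')\mid w'_{a'}=0 \textrm{ for all } a'\in \essim\varphi\}$ trivial, and surjectivity of $\varphi^*$ is already part of the theorem. Your one piece of added care---checking $\varphi(G_0)\subseteq \essim\varphi$ via the identity isomorphism of input networks, so that ``surjective'' implies ``essentially surjective''---is exactly the observation the paper leaves implicit, and it is correct.
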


\begin{example} Consider the map $i$ of networks in
  Example~\ref{ex:6.2.2}. Since $i$ is injective and essentially
  surjective the map $i^*: \V(G',\cP')\to \V (G,\cP)$ is an
  isomorphism.  Compare with Example~\ref{ex:6.1.8}.  Clearly the map
  $i$ is very far from being surjective.
\end{example}

\begin{remark} As we pointed out in Remark~\ref{rem:GS}
  in the groupoid formalism of Golubitsky {\em et
    al.}
  the quotient maps defined by
  balanced equivalence relations are surjective.  Hence the spaces of
  groupoid invariant vector fields on a network and on its quotient by
  a balanced equivalence relation are always isomorphic.

\end{remark}

\section*{Acknowledgments}

The authors thank the anonymous referees for many valuable comments that lead to significant improvements in the paper.  L.D. was partially
supported by NSF grant CMG-0934491.


\end{document}